\newtheorem{theorem}{Theorem}[section]
\newtheorem{lemma}[theorem]{Lemma}
\newtheorem{corollary}[theorem]{Corollary}
\newtheorem{proposition}[theorem]{Proposition}
\newtheorem{remark}{Remark}[section]
\newtheorem{definition}{Definition}[section]
\numberwithin{equation}{section}
\newcommand{\R}{\mathbb{R}}
\newcommand{\C}{\mathbb{C}}
\newcommand{\Z}{\mathbb{Z}}
\newcommand{\N}{\mathbb{N}}
\newcommand{\ov}[1]{\overline{#1}}
\newcommand{\cc}[1]{\widetilde{#1}}
\newcommand{\set}[2]{\left\{#1:#2\right\}}
\newcommand{\Span}[2]{\mathop{\mathrm{span}}\set{#1}{#2}}
\newcommand{\ip}[2]{\left\langle #1,#2\right\rangle}
\newcommand{\Zp}{\Z_{\ge0}}
\newcommand{\ssc}[2]{\mathstrut^{#1}\!#2}
\newcommand{\ind}[2]{\mathstrut^{#1}\!#2}
\newcommand{\te}[1]{\breve{#1}}
\newcommand{\rank}{\mathop{\mathrm{rank}}}
\newcommand{\Id}{\mathrm{I}}
\newcommand{\dotpr}{\mathbin{\cdot}}
\newcommand{\lxl}{\mathrel{<_{\mathrm{lex}}}}
\newcommand{\lxg}{\mathrel{>_{\mathrm{lex}}}}
\newcommand{\lxge}{\mathrel{\ge_{\mathrm{lex}}}}
\newcommand{\cA}{\mathcal{A}}
\newcommand{\cD}{\mathcal{D}}
\newcommand{\cP}{\mathcal{P}}
\newcommand{\cV}{\mathcal{V}}
\newcommand{\bD}{\mathbf{D}}
\newcommand{\bA}{\mathbf{A}}
\newcommand{\bI}{\mathbf{I}}
\newcommand{\bG}{\mathbf{G}}
\newcommand{\bS}{\mathbf{S}}
\newcommand{\Ft}{\mathfrak{F}}
\newcommand{\Pj}{\mathfrak{P}}
\newcommand{\LL}{{l}}
\newcommand{\K}{{k}}
\newcommand{\kk}{{r}}
\newcommand{\jj}{{q}}
\newcommand{\NN}{{n}}
\newcommand{\lL}{{m}}
\newcommand{\mm}{{j}}
\begin{document}

\title{Matrix method of polynomial solutions to constant coefficient PDE's}

\author{Victor G.~Zakharov\\[0.5ex]
Institute of Continuous Media Mechanics UB RAS,\\
Perm, 614013, Russia\\
\tt victor@icmm.ru}

\maketitle{}

\begin{abstract}
In the paper, we introduce a matrix method to constructively determine spaces of polynomial
solutions (in general, multiplied by exponentials) to a system of constant coefficient linear PDE's
with polynomial (multiplied by exponentials) right-hand sides.
The matrix method reduces the funding of a polynomial subspace of null-space of a differential
operator to the funding of the null-space of a block matrix.
Using this matrix approach, we investigate some linear algebra properties of the spaces of polynomial solutions.
In particular, for a solution space containing polynomials up to some arbitrarily large degree, we can
determine dimension and basis of the space. Some examples of polynomial (multiplied by exponential, in general)
solutions of the Laplace, Helmholtz, Poisson equations are considered.
\end{abstract}

\medskip

\noindent {\it Keywords:} polynomial solution to linear constant coefficient PDE's,
exponential solution to nonhomogeneous PDE, null-space of matrix 
\\[1eX]
\noindent {\it 2020 MSC:\ }
35C11, 35J05, 15A03, 15A06

\section{Introduction}

The polynomial solutions to linear constant coefficient PDE's is the well-known problem of algebra,
see, for example, \cite{AbramovPetkovsek,Horvath,Pedersen,Reznick,Smith}. 
However, under the  Fourier transform, a linear constant coefficient PDE is dual
to an algebraic polynomial. Thus the problem to find a polynomial solution to linear constant coefficient PDE's
can be transformed to solve a linear algebraic system. So, the linear algebra approach allows to solve
nonhomogeneous and induced by nonhomogeneous polynomials PDE's. Hence the matrix method allows to find
exponential solutions (in general, multiplied by algebraic polynomials) and to solve to PDE's with
polynomial (multiplied by exponentials) right-hand sides. Moreover, the matrix method enables to determine
some (linear algebra) characteristics such as dimension, basis, affinely-invariance, maximal total degrees 
of polynomials, etc., of a solution space. Note that the matrix method is valid for polynomials that induce
PDE's with coefficients from any algebraically closed field. Note also that the  matrix methods can be easily 
algorithmized.

In addition, the matrix approach can be generalized to find polynomial solutions to PDE's with polynomial
coefficients. This will be discussed elsewhere. 

Note that the discussed in this paper matrix approach to solve linear constant coefficient PDE's
was stimulated by a generalization of the Strang-Fix conditions, see~\cite{deBoor,DahmenMicchelli}. 
In particular, the polynomial
(multiplied by exponentials) solutions to the well-known differential equations (like Laplace's equation),
when we take a root of the operator symbol that the root is not the origin, were obtained, see~\cite{Zakh20}.

The paper is organized as follows. Section~\ref{SectionNotationsDefinitions} 
contains used in the paper notations and definitions.
In particular, in Subsection~\ref{SubSectionOrderedSets}, 
the lexicographically ordered sets of polynomials and derivatives are introduced.
Section~\ref{SectionMatrixLinearSystem} is devoted to the matrix of the linear system; 
in Subsections~\ref{SubsectionFormationMatrix}, 
a method to
construct the matrix is presented, and, in Subsection~\ref{SubsectionSomeProperties}, 
some properties of the matrix are discussed.
In Section~\ref{SectionSolutionMethods}, the matrix method to solve (in particular, nonhomogeneous and induced by 
nonhomogeneous polynomials) 
PDE's are discussed. Section~\ref{SectionExamples}, is devoted to examples to find polynomial solutions
to some PDE's.


\section{Notations and definitions}\label{SectionNotationsDefinitions}

\subsection{Basic notations}


Let  $\delta_{ij}:=
     \left\{
       \begin{aligned}
         &0&\mbox{ if }&i\ne j ,\\
         &1&\mbox{ if }&i=j
       \end{aligned}\right.$
be the Kronecker delta and $\delta$ be the Dirac delta-distribution.

A {\em multi-index} $\alpha$ is a $d$-tuple
$(\alpha_1,\dots,\alpha_d)$ with its components being nonnegative
integers, i.\,e., $\alpha\in\Z^d_{\ge0}$.
The {\em length} of a multi-index $\alpha:=(\alpha_1,\dots,\alpha_d)\in\Z^d_{\ge0}$ is defined as $\alpha_1+\cdots+\alpha_d$
and denoted by $|\alpha|$.
For
$\alpha=(\alpha_1,\dots,\alpha_d)$,
$\beta=(\beta_1,\dots,\beta_d)$, we write $\beta\le\alpha$ 
if $\beta_j\le\alpha_j$ for all $j=1,\dots,d$.
The {\em factorial} of $\alpha$ is $\alpha! :=
\alpha_1!\cdots\alpha_d!$. The {\em binomial coefficient} for
multi-indices $\alpha,\beta$ is
\begin{equation*}
   \dbinom{\alpha}{\beta}:=\dbinom{\alpha_1}{\beta_1}\cdots\dbinom{\alpha_d}{\beta_d}
   =\frac{\alpha!}{\beta!(\alpha-\beta)!}.
\end{equation*}
By definition, put
\begin{equation}\label{BinomialCoefficientVanishes}
  \dbinom{\alpha}{\beta}=0\qquad \mbox{if $\beta\nleq\alpha$}.
\end{equation}

By $x^\alpha$, where $x=(x_1,\dots,x_d)$, 
$\alpha=(\alpha_1,\dots,\alpha_d)\in\Z^d_{\ge0}$, denote a
monomial $x_1^{\alpha_1}\cdots x_d^{\alpha_d}$. Note that the {\em
total degree} of $x^\alpha$ is $|\alpha|$.
By $\Pi_\LL$, $\LL\in\Z_{\ge0}$, denote the space of (homogeneous)
polynomials that
the total degrees of the polynomials are
equal to $\LL$: $\Pi_\LL:=\Span{x^\alpha}{
\alpha\in\Z^d_{\ge0}, |\alpha|= \LL}$; and by $\Pi_{\le \LL}$ denote
the space of 
polynomials that
the total degrees of the polynomials are less
than or equal to $\LL$: $\Pi_{\le \LL}:=\Span{x^\alpha}{
\alpha\in\Z^d_{\ge0}, |\alpha|\le \LL}$ (here and in the sequel,
`$\mathop{\mathrm{span}}$' means the linear span over $\C$).
\begin{remark}
Since the linear algebra definitions and assertions are valid for {\em any} field; we can consider
polynomials with coefficients from an arbitrary field.

On the other hand, we would like to use some algebraically closed fields (a field $\C$, for example)
or we must use algebraic extensions of fields.
%
\end{remark}

The dot product of two vectors ($d$-tuples) $x=(x_1,\dots,x_d)$, $y=(y_1,\dots,y_d)$ is
$x\dotpr y:=x_1y_1+\cdots+x_d y_d$.
If all the polynomials from the space $\Pi_\LL$ 
multiplied by
an exponential $e^{ix_0\dotpr x}$, where $x_0\in\C^d$ is a given point; then we shall write $e^{ix_0\dotpr x}\Pi_\LL$
(for $\Pi_{\le \LL}$, 
$e^{ix_0\dotpr x}\Pi_{\le \LL}$).

Let $D^\alpha$ imply a differential operator
$D_1^{\alpha_1}\cdots D_d^{\alpha_d}$, where $D_n$, $n=1,\dots,d$,
is the partial derivative with respect to the $n$th coordinate.
Note that $D^{(0,\dots,0)}$ is the identity operator.
Abusing notations, for a function $f=f(x)$ and {\em constant} point $x_0$ we
shall write everywhere $D^\alpha f(x_0)$, meaning, in fact, $\left.D^\alpha f(x)\right|_{x=x_0}$.

The multi-dimensional
version of the {\em Leibniz rule} is
\begin{equation}\label{MultidimensionalLeibnizRule}
  (fg)^{(\alpha)}=\sum_{\begin{subarray}{l}
             \beta\in\Z^d_{\ge0}\\ \beta\le\alpha
           \end{subarray}}
     \dbinom{\alpha}{\beta}f^{(\beta)} g^{(\alpha-\beta)},\quad
                  \alpha\in\Z_{\ge0}^d,  
\end{equation}
where the functions $f(x)$, $g(x)$, $x=\left(x_1,\dots,x_d\right)$, are
sufficiently differentiable.

The Fourier
transform of a function $f\in L^1\left(\R^d\right)$ is defined by
\begin{equation*}
  f(x)\mapsto\hat f(\xi)=\left(\Ft f\right)(\xi):=(2\pi)^{-d/2}\int_{\R^d} f(x)e^{-i\xi\dotpr x}\,dx,\qquad \xi\in\R^d.
\end{equation*}
Note that the Fourier transform can be extended to
compactly supported functions (distributions)
that the functions belong, for example, to the {\em space of
tempered distributions} $S'\left(\R^d\right)$.
Moreover, the domain 
of the Fourier transform 
can be extended (it is possible, in particular, for a compactly supported function) to the whole complex space $\C^d$.
And the Schwartz space $S$ (of test functions), i.\,e., the space of functions that all the derivatives of the functions
are rapidly decreasing,
also can be extended to $\C^d$.  

So, for $x_0\in\C^d,\alpha\in\Z^d_{\ge0}$, we have the following formula
\begin{equation*}
  \left(\Ft e^{ix_0\dotpr x}x^\alpha\right)(\xi)=i^{|\alpha|} D^\alpha\delta(\xi-x_0),
          \qquad \xi\in\C^d.
\end{equation*}

\begin{definition}\label{InnerProductDefinition}
Let $f,g\in L^2\left(\C^d\right)$ be complex functions. 
Then an
inner product in the space $L^2\left(\C^d\right)$ is
\begin{equation}\label{InnerProduct}
  \ip{f}{g}:=\int_{\C^d}f(x)\ov{g(x)}\,dx.
\end{equation}
Here and in the sequel, the overline $\ov{\ \cdot\ }$ denotes the complex conjugation. 
\end{definition}

In the following definition, we consider complex distributions and complex test
functions, see for example~\cite{GelfandShilov,KolmogorovFomin}.

\begin{definition}\label{ComplexValuedDistributionDefinition}
Let $\phi\in S(\C^d)$ be a complex test function.
Let $f=f(x)$, $x\in \C^d$, be a locally integrable on $\C^d$ complex function.
Then the function $f$ induces some distribution
(continuous linear functional)
on $S(\C^d)$ as follows
\begin{equation}\label{ComplexValuedDistribution}
   T_f(\phi):=\int_{\C^d}\ov{f(x)}\phi(x)\,dx=\ov{\ip{f}{\phi}},
\end{equation}
where $\ip{\dotpr}{\dotpr}$, in the right-hand side of~\eqref{ComplexValuedDistribution},
is the inner product defined by~\eqref{InnerProduct}.
\end{definition}

\begin{remark}
Any functional defined by~\eqref{ComplexValuedDistribution} is a {\em linear functional};
in particular, the functional is {\em homogeneous}:
$$
  T_f(a\phi)=a T_f(\phi),
$$
where $a$ is a complex valued function.
\end{remark}



In the paper, we usually denote matrices by upper-case bold symbols or enclose the symbols of matrices
in the square brackets. On the other hand, abusing notation slightly,
we shall denote a vector of some linear space by plain lower-case symbol and interpret the vector of a
linear space as a column vector.
\begin{definition}
Suppose ${\bA}:=[a_{ij}]$ $(1\le i\le n, 1\le j\le m)$, $a_{ij}\in\C$, is an $n\times m$ matrix.
By definition, put
\begin{equation*}
  \ker{\bA}:=\set{v\in\C^{m}}{{\bA}v=0}.
\end{equation*}
We say that the linear space $\ker{\bA}$ is the (right) {\em
null-space} of the matrix $\bA$.
\end{definition}

\begin{remark}
However sometimes we shall treat 
a null-space $\ker{\bA}$ as ($\dim\ker{\bA}$)-column matrix of vectors that forms a basis for $\ker{\bA}$.
\end{remark}

Now recall some block matrix notions.
A {\em
block matrix} is a matrix broken into sections called {\em blocks}
or {\em submatrices}. A {\em block diagonal matrix} is a block
matrix 
such that the main diagonal square
submatrices can be non-zero and all the off-diagonal submatrices are
zero matrices. The (block) diagonals can be specified by an index
$k$ measured relative to the main diagonal, thus the main diagonal
has $k=0$ and the $k$-diagonal consists of the entries on the
$k$th diagonal above the main diagonal. Note that all the
$k$-diagonal submatrices, except submatrices on the main
diagonal, can be non-square. 

\subsection{Ordered sets}\label{SubSectionOrderedSets}

By $\lxl$ we denote some {\em lexicographical order} and
by ${\cA}_{\K}$, ${\K}\in\Z_{\ge0}$, denote the {\em
lexicographically ordered set} of all multi-indices of length
${\K}$
\begin{equation*}
 {\cA}_{\K}:=\left(\mathstrut^1\!\alpha,\mathstrut^2\!\alpha,\dots,
           \mathstrut^{d({\K})}\!\alpha\right),\qquad
  \begin{aligned}
           &\ind{{\jj}}{\alpha}\in\Z^d_{\ge0},\ |\ind{{\jj}}{\alpha}|={\K},\ {\jj}=1,\dots,d({\K}),\\
           &\ind{{\jj}}{\alpha}\lxl\ind{{\jj}'}{\alpha}\ \Longleftrightarrow\ {\jj}<{\jj}',
  \end{aligned}
\end{equation*}
where
\begin{equation*}
    d({\K}):=\dbinom{d+{\K}-1}{{\K}}
          =\frac{(d+{\K}-1)!}{{\K}!(d-1)!}
\end{equation*}
is the number of ${\K}$-combinations with repetition from the $d$
elements.

By $\cc{\cA}_{\K}$ we denote 
a {\em concatenated set of
multi-indices}
$$
  \cc{\cA}_{\K}:=\left(\cA_0,\cA_1,\dots,\cA_{\K}\right),
$$
where the comma must be considered as a concatenation
operator to join 2 sets. Actually the order of 
$\cc{\cA}_{\K}$ is the {\em graded lexicographical order.} By $\cc d({\K})$ denote
the length of a concatenated set like $\cc{\cA}_{\K}$
\begin{equation*}
  \cc d({\K}):=d(0)+d(1)+\cdots+d({\K})=\dfrac{(d+{\K})!}{{\K}!d!}.
\end{equation*}

By $\cP_{\K}$, ${\K}\in\Z_{\ge0}$, denote the lexicographically
ordered set of all monomials of total degree ${\K}$
\begin{equation*}
  \cP_{\K}(x) := \left( x^{\ind{1}{\alpha}},\dots, x^{\ind{d({\K})}{\alpha}}\right),\qquad
         x=\left(x_1,\dots,x_d\right),\
                    \left(\mathstrut^1\!\alpha,\dots,\mathstrut^{d({\K})}\!\alpha\right)={\cA}_{\K}.
\end{equation*}
For $\beta\in\Z^d_{\ge0}$, by ${\cP}^\beta_{\K}$ denote the
following set of monomials
\begin{equation}\label{P(x)}
  {\cP}^\beta_{\K}(x):=\left(\dbinom{\ind{1}{\alpha}}{\beta}x^{\ind{1}{\alpha}-\beta},\dots,
     \dbinom{\ind{d({\K})}{\alpha}}{\beta}x^{\ind{d({\K})}{\alpha}-\beta}\right),\qquad
         \left(\mathstrut^1\!\alpha,\dots,\mathstrut^{d({\K})}\!\alpha\right)={\cA}_{\K}.
\end{equation}
Similarly, define the ordered sets of differential operators as
\begin{align}
  \cD_{\K} &:= \left((-i)^{{\K}}D^{\ind{1}{\alpha}},\dots,(-i)^{{\K}}D^{\ind{d({\K})}{\alpha}}\right),
          \nonumber\\
  {\cD}^\beta_{\K} &:= \left((-i)^{{\K}-|\beta|}\dbinom{\ind{1}{\alpha}}{\beta}D^{\ind{1}{\alpha}-\beta},\dots,
              (-i)^{{\K}-|\beta|}\dbinom{\ind{d({\K})}{\alpha}}{\beta}D^{\ind{d({\K})}{\alpha}-\beta}\right),
           \label{DBetaVector}
\end{align}
where
$\left(\mathstrut^1\!\alpha,\dots,\mathstrut^{d({\K})}\!\alpha\right)={\cA}_{\K}$. 
Note that if $\beta\not\le\ind{{\jj}}{\alpha}$, then the ${\jj}$th
entries of~\eqref{P(x)} and~\eqref{DBetaVector} are zeros.
Moreover, if $|\beta|> {\K}$; then sets~\eqref{P(x)},~\eqref{DBetaVector}
are zero sets.

By $\cc\cP_{\K}$ and $\cc\cP^\beta_{\K}$ denote the following concatenated sets of
monomials

\begin{align}
    &\cc{\cP}_{\K} :=
    \left(\cP_0,\cP_1,\dots,\cP_{\K}\right),
    \label{ConcatenatedP(x)}\\
    &{\cc{\cP}}^\beta_{\K}:=\left(\cP_0^\beta,\cP_1^\beta,\dots,\cP_{\K}^\beta\right).
    \label{ConcatenatedP(x)beta}
\end{align}
The concatenated sets of derivatives 
are defined similarly to~\eqref{ConcatenatedP(x)}, \eqref{ConcatenatedP(x)beta}
\begin{align}
  \label{ConcatenatedD(x)}
  &\cc{\cD}_{\K} :=
  \left(\cD_0,\cD_1,\dots,\cD_{\K}\right),\\
  &{\cc{\cD}}^\beta_{\K}:=\left(\cD_0^\beta,\cD_1^\beta,\dots,\cD_{\K}^\beta\right).\nonumber
\end{align}


\section{The matrix of the linear system}\label{SectionMatrixLinearSystem}

As it has been said, 
we shall frequently interpret the ordered sets (for example, 
${\cP}_{\K}$, ${\cD}_{\K}$) as {\em
row vectors} and enclose their symbols in the square brackets
($\left[{\cP}_{\K}\right]$, $\left[{\cD}_{\K}\right]$).

\subsection{Formation of the matrix}\label{SubsectionFormationMatrix}


For some ${\K},\LL\in\Z_{\ge0}$, ${\K}\le \LL$, define a $\cc d(\LL)\times d({\K})$ matrix $\bD_{\K}$ as follows
\begin{equation}\label{MatrixD}
  \bD_{\K}
  :=\begin{bmatrix}
      \bD^0_{\K} \\[0.5ex] \bD^1_{\K} \\ \vdots \\ \bD_{\K}^{\K} \\ 0 \\ \vdots \\ 0 \\
    \end{bmatrix},
\end{equation}
where $\bD_{\K}^{\kk}$ are $d({\kk})\times d({\K})$, ${\kk}=0,1,\dots {\K}$, submatrices
defined as 
\begin{equation}\label{MatrixDlL}
  \bD_{\K}^{\kk}
  :=
  \begin{bmatrix}
    \left[\cD^{\ind{1}{\beta}}_{\K}\right] \\[1.5ex] \left[\cD^{\ind{2}{\beta}}_{\K}\right] \\ \vdots \\
    \left[\cD^{\ind{d({\kk})}{\beta}}_{\K}\right]  \\[1.5ex]
  \end{bmatrix},\qquad
  \left(\mathstrut^1\!\beta,\dots,\mathstrut^{d({\kk})}\!\beta\right)={\cA}_{\kk},
\end{equation}
and the row vectors $\left[\cD^{\ind{{\jj}}{\beta}}_{\K}\right]$, ${\jj}=1,\dots,d({\kk})$, are
given by~\eqref{DBetaVector}. By definition, if ${\kk}>{\K}$;
then $\bD^{\kk}_{\K}$ is a zero matrix.

Finally, for $\LL\in\Zp$, define a $\cc d(\LL)\times\cc d(\LL)$ matrix $\cc{\bD}_\LL$ as
\begin{equation}\label{ExtendedMatrixD}
  \cc{\bD}_\LL:=\begin{bmatrix}
      \bD_0 & \bD_1 & \dots & \bD_{\LL-1} & \bD_\LL\\
     \end{bmatrix}
  =\begin{bmatrix}
      \bD^0_0 & \bD^0_1 & \dots  & \bD^0_{\LL-1} & \bD^0_\LL\\
      0       & \bD^1_1 & \dots  & \bD^1_{\LL-1}   & \bD^1_\LL\\
      \vdots & \vdots & \ddots & \vdots & \vdots \\
      0 & 0   & \dots  & \bD^{\LL-1}_{\LL-1} & \bD^{\LL-1}_\LL\\
      0 & 0   &\dots  & 0 &\bD_\LL^\LL\\
    \end{bmatrix}.
\end{equation}
Note that, in formulas~\eqref{MatrixD},\;\eqref{ExtendedMatrixD},
the symbol `$0$' must be considered as a zero block 
of the
corresponding size.


\subsection{Some properties of the matrix}\label{SubsectionSomeProperties}  

\subsubsection{Single function
       }\label{SubsectionPropertiesOfDL}

\begin{definition}\label{DefV}
Let $\LL\in\Z_{\ge0}$, a function $f:\C^d\to\C^d$ be sufficiently differentiable, and $x_0$ be a point of $\C^d$.
By $V_\LL$ denote the {\em null-space (kernel)} of the matrix $\cc\bD_\LL f(x_0)$:
\begin{equation}\label{VDefinition}
  V_\LL:=\ker\cc\bD_\LL f(x_0).
\end{equation}
\end{definition}

\begin{definition}
Let the null-space $V_\LL$ be given by~\eqref{VDefinition} and the set $\cc\cP_\LL$ be given
by~\eqref{ConcatenatedP(x)}; then by $\cV_\LL$ we denote the following polynomial space:
\begin{equation}\label{cVDefinition}
  \cV_\LL:=\set{\left[\cc\cP_\LL\right]v}{v\in V_\LL}.
\end{equation}
\end{definition}

\begin{remark}
The component-wise form of the matrix $\cc\bD_\LL$, $\LL\in\Zp$, is 
\begin{equation}\label{Component-WiseP}
  \left[\cc\bD_\LL\right]_{{\jj}{\kk},\ 1\le {\jj},{\kk}\le\cc d(\LL)}
  =
  \left\{%
  \begin{aligned}
    &(-i)^{\left|\ind{{\kk}}{\alpha}-\ind{{\jj}}{\beta}\right|}\dbinom{\ind{{\kk}}{\alpha}}{\ind{{\jj}}{\beta}}D^{\ind{{\kk}}{\alpha}-\ind{{\jj}}{\beta}}, &&
                                                                    \ind{{\jj}}{\beta}\le\ind{{\kk}}{\alpha}\,,\\
    &\quad0,                                        && \mbox{otherwise},
  \end{aligned}%
  \right.\qquad
\end{equation}
where $\left(\ind{1}{\alpha},\dots,\ind{\cc d(\LL)}{\alpha},\right)
    = \left(\ind{1}{\beta},\dots,\ind{\cc d(\LL)}{\beta},\right)=\cc\cA_\LL$.
%
\end{remark}

\begin{theorem}\label{TheoremDfg}
Let $\LL\in\Zp$.
Let the set $\cc\cD_\LL$ be given
by~\eqref{ConcatenatedD(x)}, the matrix $\cc{\bD}_\LL$ be given by~\eqref{ExtendedMatrixD},
and functions $f,g$ be sufficiently differentiable. 
Then we have 
\begin{equation}\label{Dfg}
  \left[\cc\cD_\LL(fg)\right] = \left[\cc\cD_\LL f\right]\cc\bD_\LL g = \left[\cc\cD_\LL g\right]\cc\bD_\LL f,
\end{equation}
\end{theorem}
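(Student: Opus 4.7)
The plan is to fix an arbitrary column index $\K$ with $1\le \K\le \cc d(\LL)$ and verify the identity entry-by-entry. The $\K$-th entry of the left-hand side $[\cc\cD_\LL(fg)]$ is $(-i)^{|\ind{\K}{\alpha}|}D^{\ind{\K}{\alpha}}(fg)$, where $\ind{\K}{\alpha}$ is the $\K$-th element of the concatenated graded-lex set $\cc\cA_\LL$. Applying the multi-dimensional Leibniz rule~\eqref{MultidimensionalLeibnizRule} and splitting the scalar factor as $(-i)^{|\ind{\K}{\alpha}|}=(-i)^{|\beta|}(-i)^{|\ind{\K}{\alpha}-\beta|}$, I get
\begin{equation*}
  (-i)^{|\ind{\K}{\alpha}|}D^{\ind{\K}{\alpha}}(fg)
  =\sum_{\beta\le\ind{\K}{\alpha}}\!\bigl[(-i)^{|\beta|}D^{\beta}f\bigr]\,
      (-i)^{|\ind{\K}{\alpha}-\beta|}\binom{\ind{\K}{\alpha}}{\beta}D^{\ind{\K}{\alpha}-\beta}g.
\end{equation*}

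Next, I would match this with the $\K$-th column of the product $[\cc\cD_\LL f]\cc\bD_\LL g$. Since $|\beta|\le|\ind{\K}{\alpha}|\le\LL$, every $\beta$ appearing in the Leibniz sum occurs as some $\ind{\mm}{\beta}$ in $\cc\cA_\LL$; for indices $\mm$ with $\ind{\mm}{\beta}\not\le\ind{\K}{\alpha}$ the corresponding entry of $\cc\bD_\LL g$ vanishes by~\eqref{BinomialCoefficientVanishes}. The component-wise formula~\eqref{Component-WiseP} then identifies $[\cc\bD_\LL g]_{\mm\K}$ with exactly the factor $(-i)^{|\ind{\K}{\alpha}-\ind{\mm}{\beta}|}\binom{\ind{\K}{\alpha}}{\ind{\mm}{\beta}}D^{\ind{\K}{\alpha}-\ind{\mm}{\beta}}g$, while $[\cc\cD_\LL f]_{\mm}=(-i)^{|\ind{\mm}{\beta}|}D^{\ind{\mm}{\beta}}f$. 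Summing over $\mm$ reproduces the Leibniz expansion, proving $[\cc\cD_\LL(fg)]_\K=([\cc\cD_\LL f]\cc\bD_\LL g)_\K$.

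The second equality $[\cc\cD_\LL(fg)]=[\cc\cD_\LL g]\cc\bD_\LL f$ is then immediate from the symmetry $fg=gf$, since the whole argument is invariant under swapping $f$ and $g$.

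I expect the only real subtlety to be bookkeeping: one must check that the Leibniz index $\beta$ (ranging over all multi-indices dominated by $\ind{\K}{\alpha}$) is faithfully indexed by the row enumeration of $\cc\bD_\LL$ (i.e.\ by the graded lex list $\cc\cA_\LL$), and that the zero rows appearing below $\bD^\K_\K$ inside each block column $\bD_\K$ of~\eqref{ExtendedMatrixD} — together with the convention~\eqref{BinomialCoefficientVanishes} — correctly kill the terms with $\ind{\mm}{\beta}\not\le\ind{\K}{\alpha}$. Once this indexing is made explicit, the proof reduces to the scalar identity above and no further computation is needed.
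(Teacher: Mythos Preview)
Your proof is correct and follows exactly the approach the paper indicates: the paper omits the detailed argument and simply states that~\eqref{Dfg} is a direct consequence of the component-wise description~\eqref{Component-WiseP} together with the Leibniz rule~\eqref{MultidimensionalLeibnizRule}, which is precisely the entry-by-entry verification you carry out. Your remark about the bookkeeping (that every $\beta\le\ind{\K}{\alpha}$ lies in $\cc\cA_\LL$ since $|\beta|\le|\ind{\K}{\alpha}|\le\LL$, and that the extraneous row indices are killed by~\eqref{BinomialCoefficientVanishes}) is exactly the point that makes the matching work.
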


Here we omit the proof of formula~\eqref{Dfg}
and note only that
the formula
is a direct consequence of formula~\eqref{Component-WiseP} and
the Leibniz rule, see~\eqref{MultidimensionalLeibnizRule}. 

Now we investigate ranks of submatrices in the upper right corner of the matrix
$\cc\bD_\LL f(x_0)$ (the matrices $\bD^{{\lL}'}_{\lL} f(x_0)$,
${\lL}'=0,\dots,{\lL}$, ${\lL}=0,\dots,\LL$, given by~\eqref{MatrixDlL});
where the function $f:\C^d\to\C^d$ is sufficiently differentiable and $x_0\in\C^d$ is a given
point.

\begin{proposition}\label{Remark_OnDiagonals}
Let $\LL\in\Zp$. The submatrix $\bD^{{\lL}'}_{{\lL}}$, ${\lL}'=0,\dots,{\lL}$, ${\lL}=0,\dots,\LL$,
contains only the derivatives of order ${\lL}-{\lL}'$.
\end{proposition}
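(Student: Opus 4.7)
The plan is simply to unwind the definitions and observe that the claim is forced by the structure of $\bD_\lL^{\lL'}$. From~\eqref{MatrixDlL}, the rows of $\bD_\lL^{\lL'}$ are the vectors $[\cD_\lL^{\ind{\jj}{\beta}}]$ indexed by multi-indices $\ind{\jj}{\beta}\in\cA_{\lL'}$, so in particular $|\ind{\jj}{\beta}|=\lL'$. The columns in turn are indexed by the multi-indices $\ind{\kk}{\alpha}\in\cA_\lL$, so $|\ind{\kk}{\alpha}|=\lL$. By~\eqref{DBetaVector}, the $(\jj,\kk)$ entry of $\bD_\lL^{\lL'}$ is, up to the scalar factor $(-i)^{\lL-\lL'}\binom{\ind{\kk}{\alpha}}{\ind{\jj}{\beta}}$, the differential operator $D^{\ind{\kk}{\alpha}-\ind{\jj}{\beta}}$ whenever $\ind{\jj}{\beta}\le\ind{\kk}{\alpha}$, and vanishes otherwise.

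Hence, for any nonzero entry of $\bD_\lL^{\lL'}$, the order of the differential operator appearing in it equals
\[
  |\ind{\kk}{\alpha}-\ind{\jj}{\beta}| = |\ind{\kk}{\alpha}|-|\ind{\jj}{\beta}| = \lL-\lL',
\]
which proves the proposition. Zero entries can be absorbed trivially by regarding them as (coefficient $0$) multiples of any chosen derivative of order $\lL-\lL'$.

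There is essentially no obstacle here: the statement is a direct consequence of~\eqref{DBetaVector} and of the grading of $\cc\cA_\LL$ by length of multi-indices, which was built into the block decomposition~\eqref{ExtendedMatrixD}. The only minor point to emphasize, should the reader wish to see it, is consistency with the component-wise formula~\eqref{Component-WiseP}: restricting that formula to the block where $|\ind{\jj}{\beta}|=\lL'$ and $|\ind{\kk}{\alpha}|=\lL$ reproduces the same derivative order $\lL-\lL'$, confirming agreement with the block structure on the main and upper diagonals of $\cc\bD_\LL$.
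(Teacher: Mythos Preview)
Your argument is correct and is exactly the intended one: the paper states this proposition without proof, treating it as immediate from definitions~\eqref{MatrixDlL} and~\eqref{DBetaVector}, and your unwinding of those definitions makes the implicit reasoning explicit.
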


\begin{corollary}\label{Corollary_Diagonals}
All the submatrices on the ${\lL}$th block diagonal of the matrix
$\cc\bD_\LL$, i.\,e., the submatrices $\bD^{0}_{{\lL}},\bD^{1}_{{\lL}+1},\dots,\bD^{\LL-{\lL}}_{\LL}$,
contain the derivatives of order ${\lL}$.
\end{corollary}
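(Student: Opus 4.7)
The plan is to read off the blocks on the $\lL$th block diagonal of $\cc\bD_\LL$ directly from the display~\eqref{ExtendedMatrixD} and then apply Proposition~\ref{Remark_OnDiagonals} block by block.

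First I would identify which blocks lie on the $\lL$th block diagonal. Indexing block-rows and block-columns of $\cc\bD_\LL$ from $0$ to $\LL$, the display~\eqref{ExtendedMatrixD} shows that the entry in block-row $\kk$ and block-column $\K$ is the submatrix $\bD^\kk_\K$ when $\kk\le\K$, and the zero block otherwise. Hence the main block diagonal is $\bD^0_0,\bD^1_1,\dots,\bD^\LL_\LL$, and the $\lL$th block diagonal is characterised by the relation $\K-\kk=\lL$, which produces precisely the list $\bD^0_\lL,\bD^1_{\lL+1},\dots,\bD^{\LL-\lL}_\LL$ appearing in the statement.

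Second, I would invoke Proposition~\ref{Remark_OnDiagonals} for each entry of this list. The proposition asserts that a block $\bD^{\kk}_{\K}$ (with $\kk\le\K$) consists only of derivatives of order $\K-\kk$. On the $\lL$th block diagonal this difference is identically $\lL$, so every entry consists solely of derivatives of order $\lL$, which is exactly the claim of the corollary.

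There is essentially no obstacle: the corollary is just the observation that the arithmetic quantity $\K-\kk$ measuring differentiation order in Proposition~\ref{Remark_OnDiagonals} coincides with the geometric quantity ``displacement above the main block diagonal'' inside~\eqref{ExtendedMatrixD}. The only bookkeeping step is to note that the block-row index of $\cc\bD_\LL$ corresponds to the \emph{upper} index of $\bD^\kk_\K$ and the block-column index to the \emph{lower} one; once this convention is recognised, the corollary follows with no further computation.
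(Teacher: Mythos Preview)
Your argument is correct and matches the paper's approach: the corollary is stated there without proof precisely because it is the immediate specialization of Proposition~\ref{Remark_OnDiagonals} to the blocks with $\K-\kk=\lL$, exactly as you describe.
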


It easy to see that the ${\jj}$th, $1\le {\jj}\le d({\lL})$, row of the
matrix $\bD_{\lL}^{\lL} f(x_0)$, ${\lL}=0,\dots,\LL$, contains only one non-zero element $f(x_0)$ (if $f(x_0)\ne0$),
which is situated on the ${\jj}$th position. Thus $\bD_{\lL}^{\lL} f(x_0)=\bI f(x_0)$,
where $\bI$ is the $d({\lL})\times d({\lL})$ identity
matrix. Since the matrix $\cc\bD_\LL f(x_0)$ is an upper triangular matrix,
we can state a necessary and sufficient condition that the matrix $\cc\bD_\LL f(x_0)$ is singular.
\begin{theorem}\label{Statement:SingularConditiontoD}
The matrix $\cc\bD_\LL f(x_0)$, $\LL\in\Zp$, is singular iff $f(x_0)=0$.
\end{theorem}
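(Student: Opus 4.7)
The plan is to exploit the block upper triangular structure of $\cc\bD_\LL f(x_0)$ already visible in~\eqref{ExtendedMatrixD} and reduce the singularity question to an examination of its diagonal blocks.

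First I would note that, by the very definition~\eqref{ExtendedMatrixD}, $\cc\bD_\LL$ is block upper triangular with diagonal blocks $\bD^0_0,\bD^1_1,\dots,\bD^{\LL}_{\LL}$. For a block upper triangular matrix with square blocks on the diagonal, the determinant factors as the product of the determinants of the diagonal blocks; hence
\begin{equation*}
  \det\cc\bD_\LL f(x_0)=\prod_{\lL=0}^{\LL}\det\bigl(\bD^{\lL}_{\lL}f(x_0)\bigr).
\end{equation*}

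Next I would identify each diagonal block. By Proposition~\ref{Remark_OnDiagonals}, the submatrix $\bD^{\lL}_{\lL}$ contains only the derivatives of order $\lL-\lL=0$, i.e., the identity operator applied to $f$ (up to a sign $(-i)^{0}=1$ and a binomial coefficient $\binom{\ind{\jj}{\alpha}}{\ind{\jj}{\alpha}}=1$). Concretely, inspecting the component-wise formula~\eqref{Component-WiseP} with $\ind{\jj}{\beta}=\ind{\kk}{\alpha}$ of the same length $\lL$ forces $\jj=\kk$ (because both multi-indices then belong to $\cA_{\lL}$ in the same lexicographic position), and all other entries vanish. Therefore $\bD^{\lL}_{\lL}f(x_0)=f(x_0)\,\bI_{d(\lL)}$, as the paragraph preceding the statement already observes.

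Substituting back yields
\begin{equation*}
  \det\cc\bD_\LL f(x_0)=\prod_{\lL=0}^{\LL}f(x_0)^{d(\lL)}=f(x_0)^{\cc d(\LL)},
\end{equation*}
from which the equivalence $\det\cc\bD_\LL f(x_0)=0\Longleftrightarrow f(x_0)=0$ is immediate. There is no real obstacle here; the only subtlety is confirming that each diagonal block is truly a scalar multiple of the identity, which is a bookkeeping check against~\eqref{MatrixDlL} and~\eqref{DBetaVector} using the convention $\binom{\alpha}{\alpha}=1$ and $D^{(0,\dots,0)}=\mathrm{Id}$.
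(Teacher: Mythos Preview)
Your proof is correct and follows essentially the same approach as the paper: both exploit the (block) upper triangular structure of $\cc\bD_\LL f(x_0)$ together with the observation, made in the paragraph immediately preceding the theorem, that each diagonal block $\bD^{\lL}_{\lL} f(x_0)$ equals $f(x_0)\bI$. The only difference is cosmetic---you compute the determinant explicitly as $f(x_0)^{\cc d(\LL)}$, whereas the paper simply appeals to upper-triangularity.
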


Now state a theorem about ranks of all other blocks of the matrix $\cc\bD_\LL f(x_0)$.

\begin{theorem}\label{Theorem:RankOfDlL}
The $d({\lL}')\times d({\lL})$ submatrix $\bD^{{\lL}'}_{\lL} f(x_0)$,
${\lL}'=0,\dots,{\lL}-1$, ${\lL}=1,\dots,\LL$, $\LL\in\Zp$, has full rank, i.\,e., the rank of $\bD^{{\lL}'}_{\lL} 
f(x_0)$ is equal to $d({\lL}')$, if and only if there exists
at least one non-zero derivative $D^{\gamma}f(x_0)$,
$|\gamma|={\lL}-{\lL}'$.
\end{theorem}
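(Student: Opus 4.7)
The plan is to dispose of the ``only if'' direction by direct inspection and to reduce the ``if'' direction to a divisibility statement in the polynomial ring $\C[\xi_1,\dots,\xi_d]$. By Proposition~\ref{Remark_OnDiagonals} every entry of $\bD^{{\lL}'}_{\lL} f(x_0)$ is, up to a non-zero constant, a derivative $D^{\gamma}f(x_0)$ with $|\gamma|={\lL}-{\lL}'$. Therefore, if all such derivatives vanish, the block is the zero matrix and its rank is $0<d({\lL}')$, which settles the ``only if'' half.

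For the ``if'' direction I would suppose that some $D^{\gamma_0}f(x_0)\ne 0$ with $|\gamma_0|={\lL}-{\lL}'$ and seek to rule out any nontrivial row dependence. Assume $\sum_{|\beta|={\lL}'} c_\beta\cdot(\text{row }\beta)=0$. Using the component-wise formula~\eqref{Component-WiseP} and cancelling the common factor $(-i)^{{\lL}-{\lL}'}$, this dependence reads
\[
\sum_{\substack{|\beta|={\lL}'\\ \beta\le\alpha}} c_\beta\binom{\alpha}{\beta}\, D^{\alpha-\beta}f(x_0)=0 \qquad\text{for every $\alpha$ with $|\alpha|={\lL}$.}
\]
I would then introduce the two formal homogeneous polynomials
\[
P(\xi):=\sum_{|\beta|={\lL}'}\frac{c_\beta}{\beta!}\,\xi^\beta, \qquad F(\xi):=\sum_{|\gamma|={\lL}-{\lL}'}\frac{D^\gamma f(x_0)}{\gamma!}\,\xi^\gamma,
\]
of degrees ${\lL}'$ and ${\lL}-{\lL}'$ respectively. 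A short Cauchy-product computation shows that the coefficient of $\xi^\alpha$ in $P(\xi)F(\xi)$ equals $\frac{1}{\alpha!}\sum_{\beta}\binom{\alpha}{\beta}c_\beta D^{\alpha-\beta}f(x_0)$, so the displayed matrix identity is equivalent to the polynomial identity $P\cdot F\equiv 0$ in $\C[\xi_1,\dots,\xi_d]$.

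Because $\C[\xi_1,\dots,\xi_d]$ is an integral domain and $F\not\equiv 0$ by the hypothesis on $\gamma_0$, the identity forces $P\equiv 0$; that is, every $c_\beta=0$, contradicting the assumed dependence. Hence the $d({\lL}')$ rows are linearly independent, so $\bD^{{\lL}'}_{\lL}f(x_0)$ attains its maximal possible rank $d({\lL}')$. The only real obstacle in this plan is the bookkeeping needed to recognize the matrix relation as the coefficient-wise vanishing of the product $PF$; once the dictionary between rows/columns of the block and Taylor-like coefficients of $P$ and $F$ is set up, the theorem becomes a one-line consequence of the integral domain property of $\C[\xi_1,\dots,\xi_d]$.
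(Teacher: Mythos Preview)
Your argument is correct. The translation of a row dependence into the identity $P\cdot F\equiv 0$ is accurate: the $(\beta,\alpha)$-entry of $\bD^{{\lL}'}_{\lL} f(x_0)$ is $(-i)^{{\lL}-{\lL}'}\binom{\alpha}{\beta}D^{\alpha-\beta}f(x_0)$, and the Cauchy-product bookkeeping you sketch goes through verbatim. Since $P$ and $F$ are homogeneous of degrees ${\lL}'$ and ${\lL}-{\lL}'$, their product is homogeneous of degree ${\lL}$, so the vanishing of every coefficient of $\xi^\alpha$ with $|\alpha|={\lL}$ really is the statement $PF=0$ in $\C[\xi_1,\dots,\xi_d]$.

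Your route, however, is genuinely different from the paper's. The paper argues \emph{constructively}: it introduces an auxiliary $d({\lL}')\times d({\lL})$ array $\bG^{{\lL}'}_{\lL}$ recording the multi-index differences $\ind{{\kk}}{\alpha}-\ind{{\jj}}{\beta}$, proves two combinatorial lemmas about how a fixed $\gamma\in\cA_{{\lL}-{\lL}'}$ is distributed in this array (exactly once per row, at most once per column, and in a lexicographically controlled position within each column), and then, given the smallest ${\mm}$ with $D^{\ind{{\mm}}{\gamma}}f(x_0)\ne 0$, extracts a $d({\lL}')\times d({\lL}')$ submatrix $\bS_{\mm}$ that is upper triangular with non-zero diagonal, hence non-singular. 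Your proof trades this explicit minor for the integral-domain property of the polynomial ring: it is shorter and more conceptual, but non-constructive. The paper's approach has the advantage that the explicit triangular submatrices $\bS_{\mm}$ are reused implicitly in the subsequent rank computation (Lemma~\ref{Lemma_Rank}), where a column-elimination argument is carried out; your method proves the rank statement cleanly but does not hand you a specific invertible minor.
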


The proof of Theorem~\ref{Theorem:RankOfDlL} is given in Appendix~\ref{AppendixTheorem:RankOfDlL}.

Hence we see that each of the submatrices $\bD^{{\lL}'}_{\lL} f(x_0)$,
${\lL}'=0,\dots,{\lL}$, ${\lL}=0,\dots,\LL$, is either a full rank matrix or zero matrix.

Finally the following theorem allows to determine the dimension of the null-space of $\cc\bD_\LL f(x_0)$.
\begin{theorem}\label{Lemma_DimensionNullSpace}
Let $\LL\in\Zp$.
Suppose
%
there exists a multi-index $\alpha\in\Zp^d$, $|\alpha|>0$,
such that $D^{\alpha} f(x_0)\ne0$
and, for any multi-index $\alpha'\in\Zp^d$ such
that $0\le|\alpha'|<|\alpha|$,
the derivative $D^{\alpha'} f(x_0)$ vanishes.
Then
\begin{equation}\label{NullSpaceDimension}
  \dim\ker\cc\bD_\LL f(x_0)
  = \left\{
    \begin{aligned}
        &\cc d(\LL)-\cc d(\LL-|\alpha|)&&\mbox{ if } \LL\ge|\alpha|>0;\\
        &\cc d(\LL) &&\mbox{ if } \LL<|\alpha|.
    \end{aligned}\right.
\end{equation}
\end{theorem}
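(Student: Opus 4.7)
Set $n := |\alpha|$; my plan is to compute $\rank \cc\bD_\LL f(x_0)$ from the block upper triangular structure~\eqref{ExtendedMatrixD}, using Proposition~\ref{Remark_OnDiagonals} (each block $\bD^{\lL'}_{\lL} f(x_0)$ contains only derivatives of order $\lL - \lL'$) together with Theorem~\ref{Theorem:RankOfDlL} (such a block has full row rank $d(\lL')$ iff some order-$(\lL-\lL')$ derivative of $f$ is non-zero at $x_0$). The conclusion~\eqref{NullSpaceDimension} then follows by rank--nullity, since $\cc\bD_\LL f(x_0)$ has $\cc d(\LL)$ columns. The case $\LL < n$ is immediate: every block has derivative order $\lL - \lL' \le \LL < n$, and the hypothesis makes every such derivative vanish at $x_0$, so $\cc\bD_\LL f(x_0)$ is the zero matrix and $\dim\ker\cc\bD_\LL f(x_0) = \cc d(\LL)$.

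Assume now $\LL \ge n$. For the upper rank bound, Proposition~\ref{Remark_OnDiagonals} together with the hypothesis forces $\bD^{\lL'}_{\lL} f(x_0) = 0$ whenever $\lL - \lL' < n$. In particular, for every row block $\lL'$ with $\lL' > \LL - n$ one has $\lL - \lL' \le \LL - \lL' < n$ for all admissible columns $\lL \in \{\lL', \dots, \LL\}$, so the entire row block is zero. Thus at most $d(0) + d(1) + \dots + d(\LL - n) = \cc d(\LL - n)$ rows of $\cc\bD_\LL f(x_0)$ are non-zero, and $\rank \cc\bD_\LL f(x_0) \le \cc d(\LL - n)$.

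For the matching lower bound I would restrict attention to the submatrix $M$ formed by keeping the row blocks $\lL' = 0, \dots, \LL - n$ and the column blocks $\lL = n, \dots, \LL$. Every block of $M$ at position $(\lL', \lL)$ with $\lL < \lL' + n$ vanishes — either by $\lL < \lL'$ (via~\eqref{ExtendedMatrixD}) or by $\lL - \lL' < n$ (via the hypothesis) — so $M$ is block upper triangular with diagonal blocks $\bD^{\lL'}_{\lL' + n} f(x_0)$ for $\lL' = 0, \dots, \LL - n$. Each such diagonal block contains derivatives of order $n$, and since $D^\alpha f(x_0) \ne 0$ with $|\alpha| = n$, Theorem~\ref{Theorem:RankOfDlL} supplies full row rank $d(\lL')$. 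Peeling off a putative left null vector of $M$ block by block from the leftmost column block (standard for block upper triangular matrices with full-row-rank diagonal blocks) shows that $M$ itself has full row rank, whence $\rank \cc\bD_\LL f(x_0) \ge \rank M = \cc d(\LL - n)$. The two bounds match, yielding~\eqref{NullSpaceDimension}.

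The only delicate bookkeeping is the joint tracking of the two vanishing mechanisms — the ambient upper triangular pattern from~\eqref{ExtendedMatrixD} and the order-based vanishing forced by the hypothesis — that together single out $\{(\lL', \lL' + n) : 0 \le \lL' \le \LL - n\}$ as the diagonal positions of $M$. Once that identification is in place, the remainder is routine block matrix linear algebra and a direct appeal to Theorem~\ref{Theorem:RankOfDlL}.
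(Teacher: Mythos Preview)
Your proof is correct and follows essentially the same strategy as the paper: reduce~\eqref{NullSpaceDimension} to a rank computation (the paper's Lemma~\ref{Lemma_Rank}), then exploit the block upper triangular structure~\eqref{ExtendedMatrixD} together with Theorem~\ref{Theorem:RankOfDlL} to pin down $\rank\cc\bD_\LL f(x_0)=\cc d(\LL-n)$. The only difference is in how that last linear-algebra step is executed: the paper sketches a column Gaussian elimination that pushes the matrix into strictly upper triangular form, whereas you give a more explicit two-sided bound --- counting non-zero row blocks for the upper bound, and extracting the block upper triangular submatrix $M$ with full-row-rank diagonal blocks $\bD^{\lL'}_{\lL'+n}f(x_0)$ for the lower bound. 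Your version has the virtue of being fully spelled out where the paper's is only a sketch; in particular, your left-null-vector peeling argument makes transparent exactly how the full row rank of the non-square diagonal blocks propagates to $M$.
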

The theorem is a direct consequence of the following lemma.
\begin{lemma}\label{Lemma_Rank}
Under the conditions of Theorem~\ref{Lemma_DimensionNullSpace}, we have
$$
 \rank\cc\bD_\LL f(x_0)
 = \left\{
    \begin{aligned}
        &\cc d(\LL-|\alpha|)&&\mbox{ if } \LL\ge|\alpha|>0;\\
        &0 &&\mbox{ if } \LL<|\alpha|.
    \end{aligned}\right.
$$
\end{lemma}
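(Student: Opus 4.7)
The plan is to split into the two cases $\LL < |\alpha|$ and $\LL \ge |\alpha|$, using the component-wise formula~\eqref{Component-WiseP}, Corollary~\ref{Corollary_Diagonals}, and Theorem~\ref{Theorem:RankOfDlL}. Write $m := |\alpha|$ for brevity.

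In the case $\LL < m$, the conclusion is immediate: by Corollary~\ref{Corollary_Diagonals}, the entries of every block on the $\ell$th block diagonal of $\cc\bD_\LL$ are derivatives $D^\gamma f(x_0)$ with $|\gamma| = \ell$. Since $\cc\bD_\LL$ only carries block diagonals $\ell = 0, 1, \dots, \LL$, all of which satisfy $\ell < m$, the hypothesis that every derivative of order less than $m$ vanishes at $x_0$ forces $\cc\bD_\LL f(x_0) = 0$, so $\rank \cc\bD_\LL f(x_0) = 0$.

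For the case $\LL \ge m$, I would prove matching upper and lower bounds. For the upper bound, inspect row $\jj$ (indexed by $\ind{\jj}{\beta}$): by~\eqref{Component-WiseP} a nonzero entry in that row requires a column index $\ind{\K}{\alpha}$ with $\ind{\jj}{\beta} \le \ind{\K}{\alpha}$ and $D^{\ind{\K}{\alpha} - \ind{\jj}{\beta}} f(x_0) \ne 0$. The latter forces $|\ind{\K}{\alpha}| - |\ind{\jj}{\beta}| \ge m$, and combined with $|\ind{\K}{\alpha}| \le \LL$ this yields $|\ind{\jj}{\beta}| \le \LL - m$. Hence every row indexed by a multi-index of length greater than $\LL - m$ vanishes identically, leaving at most $\cc d(\LL - m)$ potentially nonzero rows and giving $\rank \cc\bD_\LL f(x_0) \le \cc d(\LL - m)$. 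For the matching lower bound I would exhibit an explicit rank-$\cc d(\LL - m)$ submatrix: take the block rows $\ell' = 0, 1, \dots, \LL - m$ together with the block columns $\ell = m, m+1, \dots, \LL$. By Corollary~\ref{Corollary_Diagonals} every block $\bD^{\ell'}_\ell$ with $\ell - \ell' < m$ is zero at $x_0$, so under the re-indexing $j := \ell - m$ this submatrix is block upper triangular with main-diagonal blocks $\bD^{\ell'}_{\ell' + m} f(x_0)$. Since $D^\alpha f(x_0) \ne 0$, Theorem~\ref{Theorem:RankOfDlL} shows each such diagonal block has full row rank $d(\ell')$. A short inductive argument on the block column index $j$ (restricting a vanishing row combination to block column $j$ and using the already-established vanishing of the earlier block-row coefficients, together with full row rank of $\bD^{j}_{j+m} f(x_0)$) shows that all rows of the selected submatrix are linearly independent, so its rank equals $\sum_{\ell' = 0}^{\LL - m} d(\ell') = \cc d(\LL - m)$.

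The main delicate point I expect is precisely this block-triangular independence step: the main-diagonal blocks $\bD^{\ell'}_{\ell'+m} f(x_0)$ are generally not square, only of full row rank, so one must be careful to peel off block-row contributions one column block at a time rather than invoking any block-invertibility. Once that is verified, combining the upper and lower bounds gives $\rank \cc\bD_\LL f(x_0) = \cc d(\LL - m)$, completing the proof, and Theorem~\ref{Lemma_DimensionNullSpace} then follows from the rank-nullity theorem since $\cc\bD_\LL f(x_0)$ has $\cc d(\LL)$ columns.
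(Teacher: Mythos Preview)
Your proof is correct and takes a genuinely different route from the paper's sketch. The paper argues by applying column Gaussian elimination to $\cc\bD_\LL f(x_0)$, permuting zero columns to the left until the matrix is in a strict upper-triangular form whose lowest nonzero (scalar) diagonal terminates at the bottom-right entry of the block $\bD^{\LL-|\alpha|}_\LL$; the rank is then read off from that echelon shape. You instead prove matching upper and lower bounds: the upper bound is a clean one-line count of the potentially nonzero rows (those indexed by $|\beta|\le \LL-m$), and the lower bound is obtained by isolating an explicit block upper-triangular submatrix whose diagonal blocks $\bD^{\ell'}_{\ell'+m} f(x_0)$ are full row rank by Theorem~\ref{Theorem:RankOfDlL}, and then running the standard block-by-block peeling argument you describe. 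Both arguments rest on the same two ingredients (Corollary~\ref{Corollary_Diagonals} and Theorem~\ref{Theorem:RankOfDlL}); your version has the advantage of being fully explicit about the ``delicate point'' (non-square diagonal blocks) that the paper's sketch leaves to the reader, while the paper's column-reduction picture is closer to an algorithmic description one could implement directly.
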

\begin{proof}[Sketch of the proof of Lemma~\protect\ref{Lemma_Rank}.]
For the case $\LL\ge|\alpha|>0$,
by Theorem~\ref{Theorem:RankOfDlL} and Corollary~\ref{Corollary_Diagonals},
each block on the $|\alpha|$th block diagonal of the matrix $\cc\bD_\LL f(x_0)$
is a full rank matrix. However, since the blocks of $\cc\bD_\LL f(x_0)$ are not, generally, square matrices;
the problem to determine the rank of the matrix is not trivial.

Using an analog of the Gaussian elimination algorithm (applied to columns instead of rows) and moving
(actually permutating)
zero columns to the left, the matrix $\cc\bD_\LL f(x_0)$ can always be transformed
into a {\em strictly} upper triangular matrix, where the lowest
non-zero diagonal goes to
the lower right corner of the last submatrix $\bD^{\LL-|\alpha|}_\LL$. So
$\rank\cc\bD_\LL f(x_0)=\cc d(\LL-|\alpha|)$.

Since, in the case $\LL<|\alpha|$, the matrix $\cc\bD_\LL f(x_0)$ is a zero matrix;
the case $\LL<|\alpha|$ is trivial.
\end{proof}



Introduce some notation.
We always can consider $\C^{\cc d(\LL)}$, $\LL\in\Zp$, as a space with a Cartesian
coordinate system,
where the dot product of the Cartesian coordinates
is $x \dotpr y :=\sum_{{\jj}}x_{\jj}\ov{y_{\jj}}$.
Namely, we have
$$
  \C^{\cc d(\LL)}:=\Span{e_{\jj}}{1\le {\jj}\le\cc d(\LL)},
$$
%
where 
the span
is over $\C$ and $e_{\jj}$ is the ${\jj}$th basis vector: 
$e_{\jj}:=\left(\delta_{{\jj}1},\dots,\delta_{{\jj},\cc d(\LL)}\right)$;
and we can decompose $\C^{\cc d(\LL)}$ as follows
\begin{equation}\label{EuclideanSpaceDecomposition}
  \C^{\cc d(\LL)}=\ssc{0}{\C^{\cc d(\LL)}}\oplus\ssc{1}{\C^{\cc d(\LL)}}\oplus\cdots\oplus\ssc{\LL}{\C^{\cc d(\LL)}},
\end{equation}
where
\begin{align*}
  &\ssc{0}{\C^{\cc d(\LL)}}:=\mathop{\mathrm{span}}\left\{e_1\right\},\\
  &\ssc{\lL}{\C^{\cc d(\LL)}}:=\Span{e_{\jj}}{\cc d({\lL}-1)+1\le {\jj} \le \cc d({\lL})},\qquad {\lL}=1,\dots,\LL,
\end{align*}
and the direct sums in~\eqref{EuclideanSpaceDecomposition} are orthogonal.
%
%
%
%
%
%
%


\begin{remark}
Decomposition~\eqref{EuclideanSpaceDecomposition} 
corresponds to
the block structure of the matrix $\cc\bD_\LL$, $\LL\in\Zp$,
(as well as structures of $\cc{\cP}_\LL$ and $\cc{\cD}_\LL$).
\end{remark}

\begin{definition}
By $\Pj_{\lL}$, denote
the {\em orthogonal projection} on the subspace $\ssc{{\lL}}{\C^{\cc d(\LL)}}$, ${\lL}=0,\dots,\LL$;
and define subspaces of the null-space $V_\LL$, see Definition~\ref{DefV}, as
\begin{equation}\label{SubspacelVDefinition}
  \ssc{{\lL}}{V_\LL} := \Pj_{\lL} V_\LL,\qquad {\lL}=0,1,\dots,\LL. 
\end{equation}
\end{definition}

\begin{remark}
Note that generally
$V_\LL$ 
is not a sum, like~\eqref{EuclideanSpaceDecomposition}, of 
$\ssc{{\lL}}{V_\LL}$. 
\end{remark}

Now we can formulate the following theorem.
\begin{theorem}\label{Lemma_non-zeroLSubspace}
Let $\LL\in\N$. Let the matrix $\cc\bD_\LL f(x_0)$ be singular and $V_\LL:=\ker \cc\bD_\LL f(x_0)$.
Then 
the subspace $\ssc{\LL}{V_\LL}:=\Pj_\LL V_\LL$ is non-zero.
\end{theorem}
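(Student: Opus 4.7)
The plan is to compute $\dim\Pj_\LL V_\LL$ via rank--nullity and then invoke Theorem~\ref{Lemma_DimensionNullSpace}.

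First, by Theorem~\ref{Statement:SingularConditiontoD} singularity of $\cc\bD_\LL f(x_0)$ forces $f(x_0)=0$, so $\bD_\LL^\LL f(x_0)=f(x_0)\bI=0$; combined with the block-triangular shape~\eqref{ExtendedMatrixD} this makes the entire last block row of $\cc\bD_\LL f(x_0)$ vanish. Next, the component-wise formula~\eqref{Component-WiseP} shows that each block $\bD_\K^\kk$ depends only on $\K$ and $\kk$, not on the ambient order, so the top-left $\cc d(\LL-1)\times\cc d(\LL-1)$ corner of $\cc\bD_\LL f(x_0)$ coincides with $\cc\bD_{\LL-1}f(x_0)$. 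Together with the vanishing of the last block row, this implies that a vector $v=(v',0)\in\ker\Pj_\LL$ lies in $V_\LL$ iff $v'\in\ker\cc\bD_{\LL-1}f(x_0)$.

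Applying rank--nullity to $\Pj_\LL\vert_{V_\LL}\colon V_\LL\to\ssc{\LL}{\C^{\cc d(\LL)}}$, whose kernel by the above is isomorphic to $\ker\cc\bD_{\LL-1}f(x_0)$, I obtain
\begin{equation*}
    \dim\Pj_\LL V_\LL = \dim V_\LL - \dim\ker\cc\bD_{\LL-1}f(x_0).
\end{equation*}
In the generic subcase $\LL\ge|\alpha|+1$, Theorem~\ref{Lemma_DimensionNullSpace} together with the identity $\cc d(m)-\cc d(m-1)=d(m)$ evaluates the right-hand side to $d(\LL)-d(\LL-|\alpha|)$, which is strictly positive. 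The remaining subcases---$\LL<|\alpha|$ or no non-vanishing positive-order derivative of $f$ at $x_0$---are trivial, since then $\cc\bD_\LL f(x_0)$ is identically zero, so $V_\LL=\C^{\cc d(\LL)}$ and $\Pj_\LL V_\LL$ equals the whole non-zero block $\ssc{\LL}{\C^{\cc d(\LL)}}$; the transition case $\LL=|\alpha|$ is handled by picking the appropriate branches of~\eqref{NullSpaceDimension}.

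The main obstacle I anticipate is the edge-case bookkeeping in the dimension count, since Theorem~\ref{Lemma_DimensionNullSpace} is a piecewise formula and one must correctly track which branch applies to each of $\dim V_\LL$ and $\dim\ker\cc\bD_{\LL-1}f(x_0)$; the structural identifications in the first two steps are entirely clean.
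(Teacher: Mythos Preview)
Your argument is correct and follows essentially the same route as the paper: both proofs reduce the question to the inequality $\dim\ker\cc\bD_\LL f(x_0)>\dim\ker\cc\bD_{\LL-1}f(x_0)$ and then invoke Theorem~\ref{Lemma_DimensionNullSpace}. The only cosmetic difference is that the paper argues by contradiction (assuming $\Pj_\LL V_\LL=0$ forces $\dim V_\LL=\dim\ker\cc\bD_{\LL-1}f(x_0)$), whereas you compute $\dim\Pj_\LL V_\LL$ directly via rank--nullity; your explicit identification of $\ker(\Pj_\LL\vert_{V_\LL})$ with $\ker\cc\bD_{\LL-1}f(x_0)$ in fact supplies the justification for the step the paper leaves implicit.
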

\begin{proof}
If the matrix $\cc\bD_\LL f(x_0)$ is a zero matrix, there is nothing to prove.

Assume the converse, i.\,e., suppose $\ssc{\LL}{V_\LL}$ is a zero space; 
then
\begin{equation*}
  \dim\ker\cc\bD_\LL f(x_0) 
     = \dim\ker\cc\bD_{\LL-1}  f(x_0).
\end{equation*}

Since the matrix $\cc\bD_\LL f(x_0)$ is singular and non-zero; 
there exists a number ${\kk}\in\N$, $1\le {\kk}\le \LL$, 
that the block ${\kk}$-diagonal is the lowest non-zero block diagonal. 
By Theorem~\ref{Lemma_DimensionNullSpace}, we have
\begin{equation*}
  \dim\ker\cc\bD_{\LL-1}  f(x_0) = \cc d(\LL-1) - \cc d(\LL-1-{\kk})<\cc d(\LL) - \cc d(\LL-{\kk})=\dim\ker\cc\bD_{\LL}  f(x_0).
\end{equation*}
This contradiction proves the theorem. 
\end{proof}

\subsubsection{Several functions}

For several functions $f_1, f_2,\dots,f_{\NN}$, we must consider the following concatenated matrix
\begin{equation}\label{SeveralMatrices}
  \begin{bmatrix}
    \cc\bD_\LL f_1(x_0) \\ \vdots \\ \cc\bD_\LL f_{\NN}(x_0)
  \end{bmatrix}.
\end{equation}
For concatenated  matrix~\eqref{SeveralMatrices}, an analog of Theorem~\ref{Lemma_non-zeroLSubspace} is not valid.
We can state only the following proposition.

First we must restructure matrix~\eqref{SeveralMatrices} and
introduce a notation.

 Let ${\K},\LL\in\Zp$, ${\K}\le \LL$.
Define the ${\NN}\cc d(\LL)\times d({\K})$ matrix ${\te\bD}_{\K}$, where ${\NN}$ is the number of matrices
(functions),  as follows
\begin{equation}\label{MutlimatrixMatrixD}
  {\te\bD}_{\K}
  :=\begin{bmatrix}
      {\te\bD}^0_{\K} \\[0.5ex] {\te\bD}^1_{\K} \\ \vdots \\ {\te\bD}_{\K}^{\K} \\ 0 \\ \vdots \\ 0 \\
    \end{bmatrix},\quad\mbox{where }\
  {\te\bD}_{\K}^{\kk}
  := \begin{bmatrix}
      \bD_{\K}^{\kk}f_1(x_0) \\[0.5ex] \bD_{\K}^{\kk}f_2(x_0) \\ \vdots \\ \bD_{\K}^{\kk}f_{\NN}(x_0)
    \end{bmatrix},\quad {\kk}=0,1,\dots {\K}.
\end{equation}

Secondly formulate an analog of matrix~\eqref{ExtendedMatrixD} 
\begin{equation}\label{RestructuredConcatenatedMatrixDL}
  \cc{{\te\bD}}_\LL:=\begin{bmatrix}
      {\te\bD}_0 & {\te\bD}_1 & \dots & {\te\bD}_\LL\\
     \end{bmatrix}
      =\begin{bmatrix}
      {\te\bD}^0_0 & {\te\bD}^0_1 & \dots  & {\te\bD}^0_{\LL-1} & {\te\bD}^0_{\LL} \\
      0  & {\te\bD}^1_{1}        & \dots  & {\te\bD}^1_{\LL-1}   & {\te\bD}^1_\LL\\
      \vdots & \vdots & \ddots & \vdots & \vdots \\
      0 & 0 & \dots  & {\te\bD}^{\LL-1}_{\LL-1} & {\te\bD}^{\LL-1}_\LL\\
      0 & 0 & \dots  & 0 &{\te\bD}_\LL^\LL\\
    \end{bmatrix}.
\end{equation}




Finally we present a proposition.

\begin{proposition}\label{LemmaAboutDeltaMatricesInclusion}
Let $\LL\in\N$. 
Let the matrices $\cc{{\te\bD}}_\LL$, $\cc{{\te\bD}}_{\LL-1}$ be defined
by~\eqref{RestructuredConcatenatedMatrixDL}.
Let $V_\LL:=\ker\cc{{\te\bD}}_\LL$, $V_{\LL-1}:=\ker\cc{{\te\bD}}_{\LL-1}$ and subspaces $\ssc{\LL}{V_\LL}$,
$\ssc{\LL-1}{V_{\LL-1}}$ be given by~\eqref{SubspacelVDefinition}.
Suppose the subspace $\ssc{\LL}{V_\LL}$ is non-zero, then the subspace $\ssc{\LL-1}{V_{\LL-1}}$ is also non-zero.
\end{proposition}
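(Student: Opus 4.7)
The plan is to exhibit an explicit linear map $\partial_j\colon V_\LL\to V_{\LL-1}$, essentially formal partial differentiation on coordinate vectors, that sends any $v$ with non-zero degree-$\LL$ component to an element whose degree-$(\LL-1)$ component is non-zero. Given $v=(v_0,\dots,v_\LL)\in V_\LL$ with $v_\LL\ne 0$, I first pick a multi-index $\alpha_0$ of length $\LL$ with $v_{\alpha_0}\ne 0$ and a coordinate $j\in\{1,\dots,d\}$ with $(\alpha_0)_j>0$ (possible since $\LL\ge 1$).

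For every $\alpha\in\cc\cA_{\LL-1}$ I set
$$
  (\partial_j v)_\alpha := (\alpha_j+1)\,v_{\alpha+e_j},
$$
where $e_j$ is the $j$-th standard unit multi-index. By construction $(\partial_j v)_{\alpha_0-e_j}=(\alpha_0)_j\,v_{\alpha_0}\ne 0$, so the degree-$(\LL-1)$ part of $\partial_j v$ is already non-zero. It therefore suffices to verify $\partial_j v\in V_{\LL-1}$.

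Write $E_i(\beta)$ for the $\beta$-indexed entry ($\beta\in\cc\cA_\LL$, $i=1,\dots,\NN$) of $\cc\bD_\LL f_i(x_0)\,v$, so that $v\in V_\LL$ is equivalent to the simultaneous vanishing of all $E_i(\beta)$; and let $F_i(\beta)$ be the analogous entry of $\cc\bD_{\LL-1}f_i(x_0)(\partial_j v)$ for $|\beta|\le\LL-1$. Reindexing the sum defining $F_i(\beta)$ by $\alpha\mapsto\alpha':=\alpha+e_j$ and invoking the multi-index identity
$$
  \alpha'_j\binom{\alpha'-e_j}{\beta}=(\beta_j+1)\binom{\alpha'}{\beta+e_j},
$$
which follows directly from the factorials, yields the key relation
$$
  F_i(\beta) = (\beta_j+1)\,E_i(\beta+e_j).
$$
Since $|\beta+e_j|\le\LL$, the right-hand side vanishes by hypothesis; hence $F_i(\beta)=0$ for all $i$ and all $|\beta|\le\LL-1$, i.e.\ $\cc{\te\bD}_{\LL-1}(\partial_j v)=0$ and $\partial_j v\in V_{\LL-1}$.

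The only real content of the argument lies in the index computation producing $F_i(\beta)=(\beta_j+1)E_i(\beta+e_j)$; everything else is bookkeeping. Conceptually, $\partial_j$ is the coordinate-level shadow of the ordinary partial derivative $D_j$ acting on the polynomial $[\cc\cP_\LL]v$ (in the spirit of Theorem~\ref{TheoremDfg}), and the above relation expresses the fact that the $V_\LL$-conditions dominate the $V_{\LL-1}$-conditions compatibly with differentiation. The dimension count used for the single-function Theorem~\ref{Lemma_non-zeroLSubspace} is unavailable here, since stacking several $\cc\bD_\LL f_i(x_0)$ generally destroys the rank relations exploited there; an explicit construction of this kind is really needed.
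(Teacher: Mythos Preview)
Your proof is correct and follows essentially the same route as the paper's. Both arguments pick a unit multi-index $e_j$ determined by a non-zero degree-$\LL$ coordinate of $v$, form the ``formally differentiated'' vector $(\partial_j v)_\alpha=(\alpha_j+1)v_{\alpha+e_j}$ (the paper writes this as $\te v_m={}^{\sigma(m)}\alpha_s\,v_{\sigma(m)}$ after a permutation), and verify membership in $V_{\LL-1}$ via the binomial identity $\alpha'_j\binom{\alpha'-e_j}{\beta}=(\beta_j+1)\binom{\alpha'}{\beta+e_j}$, which is exactly the identity $\binom{\alpha}{\beta}=\frac{\alpha_s}{\beta_s}\binom{\alpha-\tau}{\beta-\tau}$ used in the paper; your presentation is in fact somewhat cleaner, since you track the $(-i)$-powers and make the relation $F_i(\beta)=(\beta_j+1)E_i(\beta+e_j)$ explicit.
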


The proof is given in Appendix~\ref{AppendixPropositionAboutDeltaMatricesInclusion}.

\begin{remark}
Note that Proposition~\ref{LemmaAboutDeltaMatricesInclusion} is also valid for
single matrix~\eqref{ExtendedMatrixD}.
\end{remark}



\section{Solution methods}\label{SectionSolutionMethods}

\subsection{Homogeneous PDE's}\label{SectionHomogeneousPDEs}

In this subsection, we consider 
{\em homogeneous} 
PDE's
with constant coefficients from the field $\C$. Note the polynomials  that induce the differential equations are not
necessary homogeneous; i.\,e., the polynomials are, in general, sums of terms of different degree.


\subsubsection{One equation}


\begin{theorem}\label{Theorem:PolynomialFromKernel}
Let $P$ 
be a polynomial with constant coefficients from the field $\C$.
Let $x_0$ be a point of $\C^d$. 
Let the matrix $\cc\bD_\LL$, $\LL\in\Zp$, be given
by~\eqref{ExtendedMatrixD}. Then the following non-zero function $e^{ix_0\dotpr x}\left[\cc\cP_\LL(x)\right] v$,
$v\in\C^{\cc d(\LL)}$, belongs to
$\ker P(-iD)$ if and only if $P(x_0)=0$ and
$v\in\ker\cc\bD_\LL P(x_0)$. 
\end{theorem}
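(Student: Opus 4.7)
The plan is to reduce the PDE $P(-iD)\bigl[e^{ix_0\dotpr x}[\cc\cP_\LL(x)]\,v\bigr]=0$ to a linear algebra condition on $v$ by factoring out the exponential and then matching polynomial coefficients against the entries of $\cc\bD_\LL P(x_0)$. First I would use $D^\beta e^{ix_0\dotpr x}=(ix_0)^\beta e^{ix_0\dotpr x}$ together with the Leibniz rule~\eqref{MultidimensionalLeibnizRule} to derive the shift identity
\begin{equation*}
  P(-iD)\bigl[e^{ix_0\dotpr x}q(x)\bigr]=e^{ix_0\dotpr x}\,P(x_0-iD)\,q(x),
\end{equation*}
valid for any polynomial $q$, and Taylor-expand $P$ about $x_0$ to obtain $P(x_0-iD)=\sum_\beta\frac{D^\beta P(x_0)}{\beta!}(-iD)^\beta$. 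Since the exponential is nowhere vanishing, the PDE is equivalent to requiring the polynomial $P(x_0-iD)q(x)$ to vanish identically.

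Next I would apply this operator to $q(x)=[\cc\cP_\LL(x)]\,v=\sum_{\alpha\in\cc\cA_\LL}v_\alpha x^\alpha$. From $\frac{1}{\beta!}(-iD)^\beta x^\alpha=(-i)^{|\beta|}\binom{\alpha}{\beta}x^{\alpha-\beta}$ (using convention~\eqref{BinomialCoefficientVanishes}) and the substitution $\alpha=\gamma+\eta$ to collect by $x^\gamma$, the coefficient of $x^\gamma$ in $P(x_0-iD)q(x)$ becomes
\begin{equation*}
  \sum_{\eta}(-i)^{|\eta|}\binom{\gamma+\eta}{\eta}D^\eta P(x_0)\,v_{\gamma+\eta}.
\end{equation*}
Invoking $\binom{\gamma+\eta}{\eta}=\binom{\gamma+\eta}{\gamma}$ and writing $\alpha=\gamma+\eta$, this expression is precisely the $\gamma$-th entry of $\cc\bD_\LL P(x_0)\,v$ as read off from~\eqref{Component-WiseP}. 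Hence $P(x_0-iD)q\equiv 0$ if and only if $v\in\ker\cc\bD_\LL P(x_0)$.

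Finally I would link in the condition $P(x_0)=0$. The function $e^{ix_0\dotpr x}[\cc\cP_\LL(x)]\,v$ is non-zero precisely when $v\ne 0$, since the monomials indexed by $\cc\cA_\LL$ are linearly independent and the exponential never vanishes. If such a non-zero $v$ lies in $\ker\cc\bD_\LL P(x_0)$, then this kernel is non-trivial, so Theorem~\ref{Statement:SingularConditiontoD} forces $P(x_0)=0$; conversely, $P(x_0)=0$ makes $\cc\bD_\LL P(x_0)$ singular and supplies non-zero kernel vectors, each of which yields a non-zero solution. The main obstacle is the index bookkeeping in the middle step: one has to match a sum over a single running multi-index $\eta$ with the two-index row/column description of $\cc\bD_\LL P(x_0)$ via the binomial symmetry and the vanishing convention~\eqref{BinomialCoefficientVanishes}. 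Once this dictionary is in place, the rest is direct verification.
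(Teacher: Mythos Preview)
Your argument is correct. The shift identity $P(-iD)\bigl[e^{ix_0\dotpr x}q\bigr]=e^{ix_0\dotpr x}P(x_0-iD)q$, the Taylor expansion of $P$ about $x_0$, and the coefficient matching against~\eqref{Component-WiseP} all check out, and your use of Theorem~\ref{Statement:SingularConditiontoD} to tie in the condition $P(x_0)=0$ is exactly right.

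However, your route is genuinely different from the paper's. The paper works on the Fourier side: it transforms $e^{ix_0\dotpr x}[\cc\cP_\LL(x)]v$ into $[\cc\cD_\LL\delta(\xi-x_0)]v$, multiplies by $P(\xi)$, and then pairs against a test function $\phi$; the adjoint relation~\eqref{AdjointOperator} shifts the derivatives onto $P\ov\phi$, and the matrix Leibniz identity~\eqref{Dfg} factors this as $[\cc\cD_\LL\ov\phi(x_0)][\cc\bD_\LL P(x_0)]v$. Your approach stays entirely in the $x$-domain, applies Leibniz directly to the product $e^{ix_0\dotpr x}q(x)$, and never touches distributions or test functions. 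What you gain is a more elementary and self-contained argument that makes the row-by-row identification with~\eqref{Component-WiseP} completely explicit; what the paper's approach gains is that it exhibits the Fourier duality motivating the whole construction and uses Theorem~\ref{TheoremDfg} as a single packaged statement rather than unpacking the index combinatorics by hand.
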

%

\begin{proof}
As it has been said, see Theorem~\ref{Statement:SingularConditiontoD}, a necessary and
sufficient condition for the matrix $\cc\bD_\LL P(x_0)$ to be
singular is that the point $x_0\in\C^d$ be a root of the polynomial
$P$.

Consider the function
$$
  f(x):=P(-iD)\left(e^{ix_0\dotpr x}\left[\cc\cP_\LL(x)\right]v\right). 
$$
Taking the Fourier transform of the previous function,
we obtain 
\begin{equation}\label{DistributionFT}
  \hat f(\xi):=P(\xi)\left[\cc\cD_\LL\delta(\xi-x_0)\right]v,\qquad \xi\in\C^d.
\end{equation}
The adjoint operator (set of operators) $\cc\cD_\LL^{\,\ast}$ satisfies a property
\begin{equation}\label{AdjointOperator}
  \cc\cD_\LL^{\,\ast} = \ov{\cc\cD_\LL}
\end{equation}
(
the adjunction, like the complex conjugation, 
is distributive over the comma).
Using Definition~\ref{ComplexValuedDistributionDefinition}, Theorem~\ref{TheoremDfg}, and property~\eqref{AdjointOperator};
for any test function $\phi\in S(\C^d)$, 
the functional $T_{\hat f}(\phi)$ (where $\hat f$ is distribution~\eqref{DistributionFT}) is of the form
\begin{align}
  &T_{\hat f}(\phi)=\ov{\ip{P(\cdot)\left[\cc\cD_\LL\delta(\cdot-x_0)\right]v}{\phi}}
  =\ov{\ip{\delta(\cdot-x_0)}{\left[\ov{\cc\cD_\LL}\left(\ov{P}\phi\right)\right]\ov{v}}}\nonumber\\
  &\qquad=\left[\cc\cD_\LL\left(P(x_0)\ov{\phi(x_0)}\right)\right]v
             \ \stackrel{\mbox{by~\eqref{Dfg}}}{=}\
                 \left[\cc\cD_\LL\ov{\phi(x_0)}\right]\left[\cc\bD_\LL P(x_0)\right]v.
    \label{qqq}
\end{align}

Using the expression in the right-hand side of \eqref{qqq}, the proof 
is trivial.
\end{proof}

\begin{remark}
Theorem~\ref{Theorem:PolynomialFromKernel} can be 
proved for other (linear and linear-conjugate,
see for example~\cite{KolmogorovFomin})
functionals like~\eqref{ComplexValuedDistribution}.
\end{remark}






\begin{theorem}\label{PolynomialSpasesInclusion}
Let $\LL\in\Zp$. Let $P$ be a polynomial. Let $x_0\in\C^d$ be a root of the polynomial $P$.
Let the matrices $\cc\bD_\LL$, $\cc\bD_{\LL+1}$ be given by~\eqref{ExtendedMatrixD}.
Let the polynomial spaces be of the form
\begin{align*}
  &\cV_\LL:=\set{\left[\cc\cP_\LL\right]v}{v\in \ker\cc\bD_{\LL}P(x_0)},\\
  &\cV_{\LL+1}:=\set{\left[\cc\cP_{\LL+1}\right]v}{v\in\ker\cc\bD_{\LL+1}P(x_0)}.
\end{align*}
Then
\begin{equation}\label{PolynomialSpacesInclusion}
  \cV_\LL=\Pi_{\le \LL}\cap\cV_{\LL+1}.
\end{equation}
\end{theorem}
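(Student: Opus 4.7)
The plan is to prove the set equality by showing both inclusions, with the key observation being that $\cc\bD_{\LL+1}$ contains $\cc\bD_\LL$ as a principal submatrix in its upper-left corner. Concretely, comparing the block forms~\eqref{ExtendedMatrixD} for $\cc\bD_\LL$ and $\cc\bD_{\LL+1}$, one sees that $\cc\bD_{\LL+1}$ is obtained from $\cc\bD_\LL$ by appending one block column $(\bD^0_{\LL+1},\bD^1_{\LL+1},\dots,\bD^\LL_{\LL+1})^T$ on the right and one block row $(0,\dots,0,\bD^{\LL+1}_{\LL+1})$ at the bottom. This matches the natural splitting $\C^{\cc d(\LL+1)} = \C^{\cc d(\LL)} \oplus \C^{d(\LL+1)}$ corresponding to $\Pi_{\le\LL+1} = \Pi_{\le\LL} \oplus \Pi_{\LL+1}$.

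For the inclusion $\cV_\LL \subseteq \Pi_{\le\LL}\cap\cV_{\LL+1}$, the containment in $\Pi_{\le\LL}$ is immediate from~\eqref{cVDefinition} since every monomial in $\cc\cP_\LL$ has total degree at most $\LL$. Given $v\in\ker\cc\bD_\LL P(x_0)$, I would extend it to $\tilde v := (v,0)\in\C^{\cc d(\LL+1)}$ by padding with zeros in the $d(\LL+1)$ last entries. Applying $\cc\bD_{\LL+1} P(x_0)$ to $\tilde v$, the contribution of the appended right block column vanishes because it multiplies the zero padding, so the first $\cc d(\LL)$ rows reduce to $\cc\bD_\LL P(x_0) v = 0$, while the new bottom block row yields $\bD^{\LL+1}_{\LL+1}P(x_0)\cdot 0 = 0$. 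Hence $\tilde v\in\ker\cc\bD_{\LL+1}P(x_0)$, and $[\cc\cP_{\LL+1}]\tilde v = [\cc\cP_\LL]v$, proving the inclusion.

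For the reverse inclusion, suppose $p\in\Pi_{\le\LL}\cap\cV_{\LL+1}$, so $p = [\cc\cP_{\LL+1}]w$ for some $w\in\ker\cc\bD_{\LL+1}P(x_0)$. I would split $w = (w', w'')$ according to the decomposition $\C^{\cc d(\LL+1)} = \C^{\cc d(\LL)}\oplus\C^{d(\LL+1)}$, giving $p = [\cc\cP_\LL]w' + [\cP_{\LL+1}]w''$. Since $p\in\Pi_{\le\LL}$ and monomials of different total degree are linearly independent, the linear independence of the entries of $\cP_{\LL+1}$ forces $w'' = 0$. Reading the first $\cc d(\LL)$ block rows of $\cc\bD_{\LL+1}P(x_0)w = 0$ under this vanishing of $w''$ gives $\cc\bD_\LL P(x_0) w' = 0$, so $w'\in\ker\cc\bD_\LL P(x_0)$ and $p = [\cc\cP_\LL]w'\in\cV_\LL$.

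There is no substantive obstacle here: once the nested block structure of the matrices $\cc\bD_\LL$ is noted, both inclusions reduce to bookkeeping. The only point requiring slight care is observing that the appended bottom block row of $\cc\bD_{\LL+1}$ has its only non-zero block at the extreme right, so padding by zero on the right-hand side automatically lands in the kernel; and conversely, that any $w\in\ker\cc\bD_{\LL+1}P(x_0)$ whose top-degree components lie in a polynomial of degree $\le \LL$ must have $w''=0$, which then trivializes the interaction with the extra column.
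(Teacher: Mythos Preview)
Your proof is correct and follows essentially the same approach as the paper's: both exploit the block decomposition~\eqref{BlockFormMatrixD} of $\cc\bD_{\LL+1}P(x_0)$ with $\cc\bD_\LL P(x_0)$ as upper-left block, pad a kernel vector with zeros for the inclusion $\cV_\LL\subseteq\Pi_{\le\LL}\cap\cV_{\LL+1}$, and use the vanishing of the top-degree components for the reverse inclusion. If anything, your reverse step is slightly more explicit than the paper's in invoking the linear independence of the monomials in $\cP_{\LL+1}$ to force $w''=0$.
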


The proof of Theorem~\ref{PolynomialSpasesInclusion} is given in Appendix~\ref{AppendixPolynomialSpasesInclusion}.

\begin{remark}
%
The previous theorem reflects a 
property of differentiation
to commutate with translation. 
\end{remark}

Below we state a corollary of
Theorem~\ref{Lemma_non-zeroLSubspace} and Theorem~\ref{PolynomialSpasesInclusion}.
\begin{corollary}\label{Corollary_InfiniteDegreePolynomials}
Under the conditions of Theorem~\ref{PolynomialSpasesInclusion},
we see that
the null-space of the operator $P(-iD)$ contains polynomials (multiplied by the exponential
$e^{ix_0\dotpr x}$) up to an arbitrary large total degree.
\end{corollary}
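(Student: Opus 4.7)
The plan is to fix an arbitrary $\LL\in\N$ and produce a polynomial $p_\LL$ of total degree exactly $\LL$ such that $e^{ix_0\dotpr x} p_\LL(x)$ lies in $\ker P(-iD)$; since $\LL$ can be made arbitrarily large, the conclusion follows. The argument is essentially an assembly of Theorems~\ref{Statement:SingularConditiontoD}, \ref{Lemma_non-zeroLSubspace}, and~\ref{Theorem:PolynomialFromKernel}, with Theorem~\ref{PolynomialSpasesInclusion} available to organize the resulting chain of solution spaces.

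First, since $P(x_0)=0$ by hypothesis, Theorem~\ref{Statement:SingularConditiontoD} shows that the matrix $\cc\bD_\LL P(x_0)$ is singular for every $\LL\in\Zp$. Writing $V_\LL:=\ker\cc\bD_\LL P(x_0)$, I then apply Theorem~\ref{Lemma_non-zeroLSubspace} (with $f=P$) to obtain that the top-block projection $\ssc{\LL}{V_\LL}=\Pj_\LL V_\LL$ is non-zero. Pick any $v\in V_\LL$ whose image in $\ssc{\LL}{\C^{\cc d(\LL)}}$ is non-zero. By construction of $\cc\cP_\LL$, the coordinates of that top block correspond exactly to the coefficients of degree-$\LL$ monomials, so the polynomial $p_\LL(x):=[\cc\cP_\LL(x)]v$ has a non-trivial homogeneous part of degree $\LL$, and hence $\deg p_\LL=\LL$. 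Finally, Theorem~\ref{Theorem:PolynomialFromKernel} yields $e^{ix_0\dotpr x} p_\LL(x)\in\ker P(-iD)$, and because $\LL$ was arbitrary, the claim follows.

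No real obstacle remains: the substantive work has already been packaged into Theorem~\ref{Lemma_non-zeroLSubspace}, which guarantees a new top-degree polynomial at each level, while Theorem~\ref{Theorem:PolynomialFromKernel} merely translates the matrix kernel back into a kernel of the differential operator. As a supplementary observation, combining the above with Theorem~\ref{PolynomialSpasesInclusion} shows that the spaces $\cV_\LL$ form a strictly increasing nested chain $\cV_0\subsetneq\cV_1\subsetneq\cdots$, so that $\bigcup_{\LL\in\Zp} e^{ix_0\dotpr x}\cV_\LL$ is an infinite-dimensional subspace of $\ker P(-iD)$ containing polynomials (multiplied by the exponential $e^{ix_0\dotpr x}$) of arbitrarily large total degree.
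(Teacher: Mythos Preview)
Your argument is correct and follows essentially the same route the paper intends: the corollary is stated (without an explicit proof) as a direct consequence of Theorem~\ref{Lemma_non-zeroLSubspace} and Theorem~\ref{PolynomialSpasesInclusion}, and your write-up is precisely a careful unpacking of that implication, with Theorem~\ref{Statement:SingularConditiontoD} supplying the singularity hypothesis and Theorem~\ref{Theorem:PolynomialFromKernel} translating the matrix kernel back to $\ker P(-iD)$. Your supplementary remark that the $\cV_\LL$ form a strictly increasing chain is also correct and is exactly the role Theorem~\ref{PolynomialSpasesInclusion} plays here.
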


\begin{remark}
Corollary~\ref{Corollary_InfiniteDegreePolynomials} expresses a fundamental property, see~\cite{AbramovPetkovsek},
of polynomial solutions to a single PDE with constant coefficients.
\end{remark}


\begin{theorem}\label{Theorem2cases}
Let $\LL\in\Zp$. Let $P$ be an algebraic polynomial, let $x_0$ be a root of $P$.
Let $D^\alpha P(x_0)$, $\alpha\in\Z^d_{\ge0}$, be a non-zero derivative of the least order.
Then we have
\begin{equation*}
  \dim\left(e^{ix_0\dotpr x}\Pi_{\le \LL}\cap\ker P(-iD)\right)
  = \left\{
    \begin{aligned}
        &\cc d(\LL)-\cc d(\LL-|\alpha|)&&\mbox{ if } \LL\ge|\alpha|>0;\\
        &\cc d(\LL) &&\mbox{ if } \LL<|\alpha|.
    \end{aligned}\right.
\end{equation*}
Moreover, if $\LL<|\alpha|$; then
$$
  e^{ix_0\dotpr x}\Pi_{\le \LL}\cap\ker P(-iD) =  e^{ix_0\dotpr x}\Pi_{\le \LL}.
$$
\end{theorem}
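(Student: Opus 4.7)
The plan is to bundle the two earlier structural results into a direct dimension count. The key observation is that the linear map
\[
  \Phi : \C^{\cc d(\LL)} \longrightarrow e^{ix_0\dotpr x}\Pi_{\le\LL},\qquad
  \Phi(v) := e^{ix_0\dotpr x}\bigl[\cc\cP_\LL(x)\bigr]v
\]
is a linear isomorphism onto $e^{ix_0\dotpr x}\Pi_{\le\LL}$: the monomials listed in $\cc\cP_\LL$ are a basis of $\Pi_{\le\LL}$, and multiplication by the everywhere non-vanishing exponential $e^{ix_0\dotpr x}$ is an invertible operation on function spaces. In particular, $\dim(e^{ix_0\dotpr x}\Pi_{\le\LL})=\cc d(\LL)$.

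First I would use Theorem~\ref{Theorem:PolynomialFromKernel}. Since $x_0$ is a root of $P$, that theorem says $\Phi(v)\in\ker P(-iD)$ iff $v\in\ker\cc\bD_\LL P(x_0)$ (the $v=0$ case is automatic). Consequently $\Phi$ restricts to an isomorphism
\[
  \ker\cc\bD_\LL P(x_0)\ \xrightarrow{\ \cong\ }\ e^{ix_0\dotpr x}\Pi_{\le\LL}\cap\ker P(-iD),
\]
so it suffices to compute $\dim\ker\cc\bD_\LL P(x_0)$. Next I would apply Theorem~\ref{Lemma_DimensionNullSpace} with $f=P$; the hypothesis that $D^\alpha P(x_0)\ne 0$ is a non-zero derivative of least order is exactly what is required, and it gives the two cases displayed in the statement.

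For the ``moreover'' clause ($\LL<|\alpha|$), I would note via Proposition~\ref{Remark_OnDiagonals} (or by direct inspection of~\eqref{Component-WiseP}) that every entry of $\cc\bD_\LL P(x_0)$ is, up to a scalar, a derivative $D^\gamma P(x_0)$ with $|\gamma|\le\LL<|\alpha|$. By the minimality of $|\alpha|$, each such derivative vanishes, so $\cc\bD_\LL P(x_0)=0$ and $\ker\cc\bD_\LL P(x_0)=\C^{\cc d(\LL)}$. Transporting this through $\Phi$ yields
\[
  e^{ix_0\dotpr x}\Pi_{\le\LL}\cap\ker P(-iD)=e^{ix_0\dotpr x}\Pi_{\le\LL},
\]
which simultaneously gives dimension $\cc d(\LL)$ and the equality in the ``moreover'' part.

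I do not expect a genuine obstacle here: the argument is essentially a transport of the dimension formula from Theorem~\ref{Lemma_DimensionNullSpace} through the coordinate isomorphism $\Phi$ supplied by Theorem~\ref{Theorem:PolynomialFromKernel}. The only points that need a quick verification are that $\Phi$ is actually a bijection between the algebraic null-space and the intersection of function spaces (immediate from linear independence of monomials) and that $\cc\bD_\LL P(x_0)$ vanishes identically in the regime $\LL<|\alpha|$ (immediate from the block-diagonal description of the matrix).
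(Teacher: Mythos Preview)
Your proposal is correct and follows essentially the same approach as the paper, which simply states that the theorem is a direct consequence of Theorem~\ref{Lemma_DimensionNullSpace}; you have just made explicit the implicit use of Theorem~\ref{Theorem:PolynomialFromKernel} to identify $e^{ix_0\dotpr x}\Pi_{\le\LL}\cap\ker P(-iD)$ with $\ker\cc\bD_\LL P(x_0)$ via the coordinate isomorphism $\Phi$. The verification that $\cc\bD_\LL P(x_0)=0$ when $\LL<|\alpha|$ is likewise implicit in the paper's dimension formula but correctly spelled out in your argument.
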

The previous theorem is a direct consequence of Theorem~\ref{Lemma_DimensionNullSpace}.

\subsubsection{System of equations}

For the case of a system of PDE's, i.\,e., if we have several algebraic polynomials $P_{n'}$, $n'=1,2,\dots,{\NN}$,
the polynomial solution of the corresponding system
\begin{equation}\label{PDE's}
  \left\{
    \begin{aligned}
      P_1&(-iD)\cdot=0,\\
      &\vdots\\
      P_{\NN}&(-iD)\cdot=0\\
    \end{aligned}
    \right.
\end{equation}
is the intersection of the null-spaces
$\ker P_{n'}(-iD)$, $n'=1,\dots,{\NN}$.

Namely we have the following proposition.
\begin{proposition}
Let $P_{n'}$, $n'=1,2,\dots,{\NN}$, be algebraic polynomials.
The non-zero expression $e^{ix_0\dotpr x}\left[\cc\cP_\LL\right]v$, $\LL\in\Zp$,
where $v\in\C^{\cc d(\LL)}$, belongs to the null-space of system~\eqref{PDE's}   
iff
the vector $v$ belongs to
the null-space of the block matrix
$$
  \begin{bmatrix}
    \cc\bD_\LL P_1(x_0)\\
    \vdots\\
    \cc\bD_\LL P_{\NN}(x_0)
  \end{bmatrix}
$$
and $P_1(x_0)=\cdots=P_{\NN}(x_0)=0$.
\end{proposition}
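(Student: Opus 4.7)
The plan is to reduce the proposition directly to the single-equation case, namely Theorem~\ref{Theorem:PolynomialFromKernel}, by a straightforward intersection argument. The essential observation is that the null-space of the system~\eqref{PDE's} is, by definition, the intersection $\bigcap_{n'=1}^{\NN}\ker P_{n'}(-iD)$, so membership of the candidate function in this intersection is equivalent to simultaneous membership in each $\ker P_{n'}(-iD)$.

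First I would fix the function $u(x):=e^{ix_0\dotpr x}\left[\cc\cP_\LL(x)\right]v$ and apply Theorem~\ref{Theorem:PolynomialFromKernel} to each operator $P_{n'}(-iD)$ separately. That theorem gives, for every $n'\in\{1,\dots,{\NN}\}$, the equivalence
\[
  u\in\ker P_{n'}(-iD)\quad\Longleftrightarrow\quad P_{n'}(x_0)=0\ \text{ and }\ v\in\ker\cc\bD_\LL P_{n'}(x_0).
\]
Taking the conjunction over all $n'$, the left-hand sides combine into $u\in\ker$ of the system, while the right-hand side becomes the simultaneous vanishing $P_1(x_0)=\cdots=P_{\NN}(x_0)=0$ together with
\[
  v\in\bigcap_{n'=1}^{\NN}\ker\cc\bD_\LL P_{n'}(x_0).
\]

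The second step is the elementary linear-algebra identity that the intersection of the null-spaces of several matrices with the same number of columns coincides with the null-space of the matrix obtained by stacking them vertically:
\[
  \bigcap_{n'=1}^{\NN}\ker\cc\bD_\LL P_{n'}(x_0)
  =\ker\begin{bmatrix}\cc\bD_\LL P_1(x_0)\\ \vdots\\ \cc\bD_\LL P_{\NN}(x_0)\end{bmatrix},
\]
since a vector $v$ is annihilated by every block if and only if the stacked block matrix annihilates $v$. Combining this with the previous step yields exactly the claimed equivalence.

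There is no real obstacle in this argument; the substantive content already resides in Theorem~\ref{Theorem:PolynomialFromKernel} (which provides the Fourier-side characterization) and in the distributive behaviour of $\ker$ under vertical concatenation. The only point worth being careful about is ensuring that the non-triviality hypothesis on $u$ matches the statement of Theorem~\ref{Theorem:PolynomialFromKernel}: since $e^{ix_0\dotpr x}\left[\cc\cP_\LL\right]v\ne 0$ is equivalent to $v\ne0$, the hypothesis transfers verbatim to each application and no extra work is required.
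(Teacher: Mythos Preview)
Your argument is correct and is precisely the intended one: the paper does not give a separate proof of this proposition, treating it as an immediate consequence of Theorem~\ref{Theorem:PolynomialFromKernel} applied componentwise together with the trivial identity $\bigcap_{n'}\ker\cc\bD_\LL P_{n'}(x_0)=\ker$ of the stacked block matrix. Nothing further is needed.
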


\subsection{PDE with polynomial right-hand side}

In the previous subsection, we have considered polynomial
solutions of homogeneous constant coefficient PDE's; i.\,e., actually, PDE's have zero right-hand
sides. Nevertheless the matrix approach allows generalizing PDE's
to polynomial right-hand sides (multiplied by an exponential).

\begin{theorem}\label{Theorem:PolynomialOnRight-HandSide}
Let $\LL\in\Zp$. Let $P,F$ be algebraic polynomials with constant coefficients from $\C$,
$x_0\in\C^d$ be a root of $P$, $\alpha\in\Zp^d$ be a multi-index that defines the least order derivative
such that $D^\alpha P(x_0)\ne0$. Let the polynomial $F$ be defined as follows: $F(x):=\left[\cc\cP_{\deg F}(x)\right]w$,
$w\in\C^{\cc d(\deg F)}$. Let $\LL\ge\deg F+|\alpha|$, the matrix $\cc\bD_\LL$ be given by~\eqref{ExtendedMatrixD}.
Let $v\in\C^{\cc d(\LL)}$ be a column vector and $p:=\left[\cc\cP_{\LL}\right]v$ be the corresponding polynomial. Then
the algebraic polynomial $p$   
is a solution to PDE
\begin{equation}\label{PDEF}
  P(-iD)\left(e^{ix_0\dotpr x}p(x)\right)=e^{ix_0\dotpr x}F(x)
\end{equation}
iff
the vector $v$  
is a solution to linear algebraic equation
\begin{equation}\label{LinearSystem}
  \left[\cc\bD_\LL P(x_0)\right]v
  =\begin{bmatrix}
    w^T &
    \underbrace{\begin{array}{ccc} 0 & \cdots & 0\end{array}}_{\cc d(\LL)-\cc d\left(\deg F\right)}
  \end{bmatrix}^T
%
%
\end{equation}
\end{theorem}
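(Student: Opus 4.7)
The plan is to mirror the Fourier-transform proof of Theorem~\ref{Theorem:PolynomialFromKernel}, adapting it to the inhomogeneous right-hand side. First I would apply the Fourier transform to both sides of \eqref{PDEF}. As in the derivation leading to \eqref{DistributionFT}, the left-hand side becomes $P(\xi)\bigl[\cc\cD_\LL \delta(\xi - x_0)\bigr] v$. For the right-hand side, writing $F(x)=[\cc\cP_{\deg F}(x)]w$ and using $\Ft(e^{ix_0\dotpr x}x^\alpha)(\xi)=i^{|\alpha|}D^\alpha\delta(\xi-x_0)$, one gets $\bigl[\cc\cD_{\deg F}\delta(\xi-x_0)\bigr]w$. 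Since $\LL\ge\deg F$, I would pad $w$ by zeros to produce the column $u:=\bigl[w^T,\ 0,\ \dots,\ 0\bigr]^T\in\C^{\cc d(\LL)}$ appearing on the right of \eqref{LinearSystem}, so the transformed right-hand side rewrites as $\bigl[\cc\cD_\LL \delta(\xi-x_0)\bigr]u$.

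Next I would test both sides against an arbitrary $\phi\in S(\C^d)$ via the complex-valued functional of Definition~\ref{ComplexValuedDistributionDefinition}. Repeating the manipulation in \eqref{qqq} — which only uses Theorem~\ref{TheoremDfg} and the adjunction property \eqref{AdjointOperator} — the functional of the transformed left-hand side comes out to $\bigl[\cc\cD_\LL\ov{\phi(x_0)}\bigr]\bigl[\cc\bD_\LL P(x_0)\bigr]v$, while the right-hand functional is $\bigl[\cc\cD_\LL\ov{\phi(x_0)}\bigr]u$. Equality for every $\phi$, together with the fact that the row $\bigl[\cc\cD_\LL\ov{\phi(x_0)}\bigr]$ sweeps out a spanning set of $\C^{\cc d(\LL)}$ as $\phi$ varies over suitable test functions, is equivalent to the single vector identity $\bigl[\cc\bD_\LL P(x_0)\bigr]v=u$, which is precisely \eqref{LinearSystem}. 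This establishes the claimed equivalence.

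Finally I would confirm that the hypothesis $\LL\ge\deg F+|\alpha|$ is exactly the consistency condition for \eqref{LinearSystem}. By Proposition~\ref{Remark_OnDiagonals} and the minimality of $|\alpha|$, every block $\bD^{{\lL}'}_{\lL} P(x_0)$ with ${\lL}-{\lL}'<|\alpha|$ vanishes, so the last $\cc d(\LL)-\cc d(\LL-|\alpha|)$ rows of $\cc\bD_\LL P(x_0)$ are identically zero. Because the only potentially nonzero entries of $u$ lie in its first $\cc d(\deg F)$ positions, consistency forces $\cc d(\deg F)\le \cc d(\LL-|\alpha|)$, i.e.\ $\LL\ge\deg F+|\alpha|$, which is the stated bound; so the hypothesis is exactly what guarantees that both \eqref{PDEF} and \eqref{LinearSystem} are solvable.

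The main obstacle is the careful handling of the complex-valued distributional formalism, specifically the claim that as $\phi$ ranges over $S(\C^d)$ the row vectors $\bigl[\cc\cD_\LL\ov{\phi(x_0)}\bigr]$ span $\C^{\cc d(\LL)}$, which is what promotes the distributional equality to a componentwise vector equality. Fortunately this is the same point already settled in the proof of Theorem~\ref{Theorem:PolynomialFromKernel}, so it suffices to invoke it here rather than rederive it, and no genuinely new idea is required.
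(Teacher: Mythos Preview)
Your proof of the equivalence is correct and follows exactly the route the paper indicates: the paper leaves the details to the reader, noting only that the argument rests on Theorem~\ref{Theorem:PolynomialFromKernel}, Theorem~\ref{Lemma_DimensionNullSpace}, and Rouch\'e--Capelli, and you have carried this out by adapting the distributional computation~\eqref{qqq} to the inhomogeneous right-hand side.

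One minor clarification on your final paragraph: the equivalence itself already holds for any $\LL\ge\deg F$; the stronger hypothesis $\LL\ge\deg F+|\alpha|$ is there to guarantee \emph{existence} of solutions, which is really item~2 of the subsequent Remark rather than part of the theorem proper. Your zero-row observation gives only the necessity of that bound; sufficiency requires the rank computation of Lemma~\ref{Lemma_Rank} (the image of $\cc\bD_\LL P(x_0)$ is exactly the span of the first $\cc d(\LL-|\alpha|)$ coordinate vectors) together with Rouch\'e--Capelli, as the paper's hint suggests.
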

\begin{remark}
Under the conditions of the previous theorem, introduce notation.
Denote by $V_\LL$ a linear space such that any vector $v\in V_\LL$ is a solution to linear algebraic
equation~\eqref{LinearSystem} and by $\cV_\LL:=\set{\left[\cc\cP_\LL\right]v}{v\in V_\LL}$ denote
the corresponding space of polynomials that the polynomials are solutions to PDE~\eqref{PDEF}.
So we can state 3 remarks:
\begin{enumerate}
\item
$\dim\cV_\LL=\cc d(\LL)-\cc d(\LL-|\alpha|)$;
\item
for any polynomial $F$ such that $\deg F\le \LL-|\alpha|$, linear system~\eqref{LinearSystem} is consistent;
\item
there exists a polynomial of an arbitrary large degree to satisfy 
PDE~\eqref{PDEF}.
\end{enumerate}
\end{remark}

However, in the previous theorem, the point $x_0\in\C^d$ can be no root of $P$. Below we state the corresponding
remark.


\begin{remark}
Under the conditions of the previous theorem and suppose $P(x_0)\ne0$; we see that
the polynomial $p$ is defined uniquely and
does not depend on the choice of $\LL$.
\end{remark}


The proof of the previous theorem and remarks is left to the reader. Note only that the theorem and remarks
are based on Theorem~\ref{Lemma_DimensionNullSpace}, Theorem~\ref{Theorem:PolynomialFromKernel},
and the classical Rouch\'e-Capelli theorem.





\section{Examples}\label{SectionExamples}
\newcounter{Example}

Below, for a function of 2 variables $f=f(x,y)$, $x,y\in\C$,
we present a component-wise form of the matrix $\cc\bD_3f(x_0,y_0)$, $(x_0,y_0)\in\C^2$:
{\arraycolsep=0.3em
\begin{align*}
&\cc\bD_3f(x_0,y_0)\\
&=\left.\left[
\begin{array}{c|cc|ccc|cccc}
 f & -i f_x & -i f_y & -f_{xx} & -f_{xy} &
   -f_{yy} & i f_{xxx} & i f_{xxy} & i f_{xyy} & i
    f_{yyy} \\
 \hline
 0 & f & 0 & -2 i f_x & -i f_y & 0 & -3 f_{xx} & -2
   f_{xy} & -f_{yy} & 0 \\
 0 & 0 & f & 0 & -i f_x & -2 i f_y & 0 & -f_{xx} &
   -2 f_{xy} & -3 f_{yy} \\
 \cline{2-10}
 \multicolumn{1}{c}{0} & 0 & 0 & f & 0 & 0 & -3 i f_x & -i f_y & 0 & 0 \\
 \multicolumn{1}{c}{0} & 0 & 0 & 0 & f & 0 & 0 & -2 i f_x & -2 i f_y & 0 \\
 \multicolumn{1}{c}{0} & 0 & 0 & 0 & 0 & f & 0 & 0 & -i f_x & -3 i f_y \\
 \cline{4-10}
 \multicolumn{1}{c}{0} & 0 & \multicolumn{1}{c}{0} & 0 & 0 & 0 & f & 0 & 0 & 0 \\
 \multicolumn{1}{c}{0} & 0 & \multicolumn{1}{c}{0} & 0 & 0 & 0 & 0 & f & 0 & 0 \\
 \multicolumn{1}{c}{0} & 0 & \multicolumn{1}{c}{0} & 0 & 0 & 0 & 0 & 0 & f & 0 \\
 \multicolumn{1}{c}{0} & 0 & \multicolumn{1}{c}{0} & 0 & 0 & 0 & 0 & 0 & 0 & f \\
\end{array}
\right]\right|_{\displaystyle\begin{aligned} &(x,y)\\[-0.01\textheight]&=(x_0,y_0)\end{aligned}}\!\!.
\end{align*}
}

\subsection{Homogeneous equations}

First we consider 3 examples of polynomial solution to PDE's, presented in the paper~\cite{Pedersen}.

\paragraph{\addtocounter{Example}{1}Example~\arabic{Example}}

The first example is 2D Laplace operator
\begin{equation}\label{2DLaplaceOperator}
  L_1:=\frac{\partial^2}{\partial x^2}+\frac{\partial^2}{\partial y^2},\qquad x,y\in\R.
\end{equation}
So the polynomial that induces the operator
is $P_1(x,y):=-x^2-y^2$, $x,y\in\C$, Thus the $10\times 10$ matrix $\cc\bD_3 P_1(0,0)$ is of the form
\begin{equation}\label{LaplaceOperatorMatrix}
\cc\bD_3 P_1(0,0) :=
\left[
\begin{array}{c|cc|ccc|cccc}
 0 & 0 & 0 & 2 & 0 & 2 & 0 & 0 & 0 & 0 \\
 \hline
 0 & 0 & 0 & 0 & 0 & 0 & 6 & 0 & 2 & 0 \\
 0 & 0 & 0 & 0 & 0 & 0 & 0 & 2 & 0 & 6 \\
 \cline{2-10}
 \multicolumn{1}{c}{0}& 0 & 0 & 0 & 0 & 0 & 0 & 0 & 0 & 0 \\
 \multicolumn{1}{c}{0} & 0 & 0 & 0 & 0 & 0 & 0 & 0 & 0 & 0 \\
 \multicolumn{1}{c}{0} & 0 & 0 & 0 & 0 & 0 & 0 & 0 & 0 & 0 \\
 \cline{4-10}
 \multicolumn{1}{c}{0} & 0 & \multicolumn{1}{c}{0} & 0 & 0 & 0 & 0 & 0 & 0 & 0 \\
 \multicolumn{1}{c}{0} & 0 & \multicolumn{1}{c}{0} & 0 & 0 & 0 & 0 & 0 & 0 & 0 \\
 \multicolumn{1}{c}{0} & 0 & \multicolumn{1}{c}{0} & 0 & 0 & 0 & 0 & 0 & 0 & 0 \\
 \multicolumn{1}{c}{0} & 0 & \multicolumn{1}{c}{0} & 0 & 0 & 0 & 0 & 0 & 0 & 0
\end{array}
\right].
\end{equation}
Now multiplying row vector $\left[\cc\cP_3\right]$, see~\eqref{ConcatenatedP(x)}, on the right
by a $7$-column matrix (null-space) $\left[\ker\cc\bD_3 P_1(0,0)\right]$;
we obtain the well-known result
\begin{align}
  &\left[\begin{array}{c|cc|ccc|cccc}
    1 & x & y & x^2 & xy & y^2 & x^3 & x^2y & xy^2 & y^3
         \end{array}\right]
\left[
\begin{array}{ccccccc}
 1 & 0 & 0 & 0 & 0 & 0 & 0 \\
 \hline
 0 & 1 & 0 & 0 & 0 & 0 & 0 \\
 0 & 0 & 1 & 0 & 0 & 0 & 0 \\
 \hline
 0 & 0 & 0 & 0 & -1 & 0 & 0 \\
 0 & 0 & 0 & 1 & 0 & 0 & 0 \\
 0 & 0 & 0 & 0 & 1 & 0 & 0 \\
 \hline
 0 & 0 & 0 & 0 & 0 & -1 & 0 \\
 0 & 0 & 0 & 0 & 0 & 0 & -3 \\
 0 & 0 & 0 & 0 & 0 & 3 & 0 \\
 0 & 0 & 0 & 0 & 0 & 0 & 1
\end{array}
\right] \nonumber \\
  &\qquad\qquad\qquad\qquad =
  \begin{bmatrix}
    1&x&y&x y&y^2-x^2&3 x y^2-x^3&y^3-3 x^2 y
  \end{bmatrix}.
  \label{WellKnownPolynomials}
\end{align}

\paragraph{\addtocounter{Example}{1}Example~\arabic{Example}}

The polynomial $P_2(x,y):=-x^2-iy$ that induces the operator  of this example
$L_2:=\frac{\partial^2}{\partial x^2}-\frac{\partial}{\partial y}$ is not homogeneous.
The matrix $\cc\bD_3 P_2(0,0)$ is of the form
$$
\cc\bD_3 P_2(0,0):=
\left[
\begin{array}{c|cc|ccc|cccc}
 0 & 0 & -1 & 2 & 0 & 0 & 0 & 0 & 0 & 0 \\
 \hline
 0 & 0 & 0 & 0 & -1 & 0 & 6 & 0 & 0 & 0 \\
 0 & 0 & 0 & 0 & 0 & -2 & 0 & 2 & 0 & 0 \\
 \cline{2-10}
 \multicolumn{1}{c}{0} & 0 & 0 & 0 & 0 & 0 & 0 & -1 & 0 & 0 \\
 \multicolumn{1}{c}{0} & 0 & 0 & 0 & 0 & 0 & 0 & 0 & -2 & 0 \\
 \multicolumn{1}{c}{0} & 0 & 0 & 0 & 0 & 0 & 0 & 0 & 0 & -3 \\
 \cline{4-10}
 \multicolumn{1}{c}{0} & 0 & \multicolumn{1}{c}{0} & 0 & 0 & 0 & 0 & 0 & 0 & 0 \\
 \multicolumn{1}{c}{0} & 0 & \multicolumn{1}{c}{0} & 0 & 0 & 0 & 0 & 0 & 0 & 0 \\
 \multicolumn{1}{c}{0} & 0 & \multicolumn{1}{c}{0} & 0 & 0 & 0 & 0 & 0 & 0 & 0 \\
 \multicolumn{1}{c}{0} & 0 & \multicolumn{1}{c}{0} & 0 & 0 & 0 & 0 & 0 & 0 & 0
\end{array}
\right]
$$
and a basis of the corresponding polynomial space $P_2(-iD)\cap\Pi_{\le 3}$ is
$$
  \left\{1,x,x^2+2 y,x^3+6 x y\right\}.
$$

\paragraph{\addtocounter{Example}{1}Example~\arabic{Example}} The third example taken 
from the paper~\cite{Pedersen} is interesting from two points
of view. Namely we have a system of two operators: an elliptic (the Laplace operator)
$L_3:=\frac{\partial^2}{\partial x^2}+\frac{\partial^2}{\partial y^2}+\frac{\partial^2}{\partial z^2}$
and hyperbolic $L_4:=\frac{\partial}{\partial x}\frac{\partial}{\partial y}+
\frac{\partial}{\partial x}\frac{\partial}{\partial z}+\frac{\partial}{\partial y}\frac{\partial}{\partial z}$. And
it is a 3D example.

Since, in~\cite{Pedersen}, the 3rd degree polynomials are considered only;
therefore, we use the last (block) columns $\bD_3P_i(0,0)
  :=\begin{bmatrix}
      \bD^0_3P_i(0,0)\\ \vdots\\ \bD_3^3P_i(0,0) 
    \end{bmatrix},\ i=3,4$, (see~\eqref{MatrixD},~\eqref{MatrixDlL}) of the matrices $\cc\bD_3P_3(0,0)$,
$\cc\bD_3P_4(0,0)$,
where $P_3,P_4$ are the algebraic polynomials that corresponds to the operators $L_3,L_4$, respectively.
(It is possible because of
affinely invariance of the polynomial solution.) Below we present the following $6\times10$ matrix (because other
blocks are zero matrices):
\begin{equation}\label{3DSystem}
  \left[\begin{array}{c}
    \bD^1_3 P_3(0,0) \\
    \hline
    \bD_3^1 P_4(0,0) \\
  \end{array}\right] :=
  \left[
\begin{array}{cccccccccc}
 6 & 0 & 2 & 0 & 0 & 0 & 0 & 2 & 0 & 0 \\
 0 & 2 & 0 & 6 & 0 & 0 & 0 & 0 & 2 & 0 \\
 0 & 0 & 0 & 0 & 2 & 0 & 2 & 0 & 0 & 6 \\
 \hline
 0 & 2 & 0 & 0 & 2 & 1 & 0 & 0 & 0 & 0 \\
 0 & 0 & 2 & 0 & 0 & 1 & 2 & 0 & 0 & 0 \\
 0 & 0 & 0 & 0 & 0 & 1 & 0 & 2 & 2 & 0 \\
\end{array}
\right]
\end{equation}
Using the null-space of matrix~\eqref{3DSystem}, we get a basis of the space $P_3(-iD)\cap P_4(-iD)\cap\Pi_3$:
\begin{align*}
&\left\{3 x^2 y-3 x^2 z-y^3+z^3,-x^3+3 x^2 y+3 x y^2-6 x y z-2
   y^3+3 y z^2\right.,\\
 &\qquad\qquad-2 x^3+3 x^2 y+3 x y^2-6 x y z+3 x z^2-y^3, \\
   &\hspace{0.25\textwidth}\left.x^3+3
   x^2 y-3 x^2 z-3 x y^2-y^3+3 y^2 z\right\}.
\end{align*}
Note that, in the paper~\cite{Pedersen}, another basis is presented. But it is not hard
to see that our own and Pedersen's, see~\cite{Pedersen}, basis are bases of the same space.

Secondly we present two examples, where other (not the origin) roots  of polynomials are used.

\paragraph{\addtocounter{Example}{1}Example~\arabic{Example}}

This example is taken from the paper~\cite{Zakh20}.
Since the null-space of the symbol of 2D Laplace operator~\eqref{2DLaplaceOperator}
is a 2D, if we shall consider the null-space in $\C^2$, manifold;
we can take another root of $P_1$. Here we take, as an example, a root $(1,i)$;
and we obtain the following matrix
\begin{equation*}
\cc\bD_3 P_1(1,i):=
\left[
\begin{array}{c|cc|ccc|cccc}
 0 & -2 i & 2 & -2 & 0 & -2 & 0 & 0 & 0 & 0 \\
 \hline
 0 & 0 & 0 & -4 i & 2 & 0 & -6 & 0 & -2 & 0 \\
 0 & 0 & 0 & 0 & -2 i & 4 & 0 & -2 & 0 & -6 \\
 \cline{2-10}
 \multicolumn{1}{c}{0} & 0 & 0 & 0 & 0 & 0 & -6 i & 2 & 0 & 0 \\
 \multicolumn{1}{c}{0} & 0 & 0 & 0 & 0 & 0 & 0 & -4 i & 4 & 0 \\
 \multicolumn{1}{c}{0} & 0 & 0 & 0 & 0 & 0 & 0 & 0 & -2 i & 6 \\
 \cline{4-10}
 \multicolumn{1}{c}{0} & 0 & \multicolumn{1}{c}{0} & 0 & 0 & 0 & 0 & 0 & 0 & 0 \\
 \multicolumn{1}{c}{0} & 0 & \multicolumn{1}{c}{0} & 0 & 0 & 0 & 0 & 0 & 0 & 0 \\
 \multicolumn{1}{c}{0} & 0 & \multicolumn{1}{c}{0} & 0 & 0 & 0 & 0 & 0 & 0 & 0 \\
 \multicolumn{1}{c}{0} & 0 & \multicolumn{1}{c}{0} & 0 & 0 & 0 & 0 & 0 & 0 & 0
\end{array}
\right]
\end{equation*}
and
subspace of Laplace's operator null-space:
\begin{align}
  e^{ix-y}&\Pi_{\le3}\cap\ker L_1 \nonumber \\
  &=e^{ix-y}\mathop{\mathrm{span}}\left\{1,x+iy,x^2+2 i x y-y^2,x^3+3 i x^2 y-3 x y^2-i y^3\right\}.
  \label{LaplaceAnotherRoot}
\end{align}
Note the real or imaginary parts of the polynomials in~\eqref{LaplaceAnotherRoot} (cf.~\eqref{WellKnownPolynomials}
and~\eqref{LaplaceAnotherRoot}) multiplied by
an exponential, do not become solutions of the Laplace operator.

\paragraph{\addtocounter{Example}{1}Example~\arabic{Example}}

This example is also taken from the paper~\cite{Zakh20}. Namely we shall consider
Laplace operator~\eqref{2DLaplaceOperator} shifted by a vector $(1,i)$:
$$
  L_\arabic{Example}:=\left(\frac{\partial}{\partial x}-\Id\right)^2+\left(\frac{\partial}{\partial y}-i\Id\right)^2,
  \qquad\text{where $\Id$ is the identity operator}.
$$
However the corresponding polynomial $P_\arabic{Example}$ will be of the form
$$
  P_\arabic{Example}(x,y):=\left(ix-1\right)^2+\left(iy-i\right)^2
$$
and the matrix $\cc\bD_3P_\arabic{Example}(-i,1)$ coincides with matrix~\eqref{LaplaceOperatorMatrix}.
Thus we have
$$
  e^{x+iy}\Pi_{\le3}\cap L_\arabic{Example} = e^{x+iy}\left(\Pi_{\le3}\cap L_1\right),
$$
where $L_1$ is Laplace operator~\eqref{2DLaplaceOperator}.

\subsection{PDE with polynomial right-hand side}

Finally we discuss 
PDE's with polynomial (multiplied, in general,
by an exponential) right-hand sides.

\paragraph{\addtocounter{Example}{1}Example~\arabic{Example}}
Consider the Helmholtz operator
\begin{equation}\label{HelmholtzOperator}
  L_5:=\frac{\partial^2}{\partial x^2}+\frac{\partial^2}{\partial y^2}-\Id,
\end{equation}
where $\Id$ is the
identity operator; also consider PDE with a polynomial right-hand side
\begin{equation}\label{HelmholtzPDE}
  F(x,y):=2 + 3 x - 2 x y + y^2=\left[\cc\cP_2(x,y)\right]\begin{bmatrix} 2 & 3 & 0 & 0 & -2 & 1 \end{bmatrix}^T.
\end{equation}
Let $(x_0,y_0):=(0,0)$.
However the origin is not a root
of the corresponding polynomial
\begin{equation}\label{HelmholtzPolynomial}
  P_5(x,y):=-x^2-y^2-1;
\end{equation}
and, according to Theorem~\ref{Theorem:PolynomialOnRight-HandSide},
we can take $\LL=2$.
So the linear algebraic equation
is of the form
\begin{equation}\label{NonSingularMatrix}
\left[
\begin{array}{c|cc|ccc}
 -1 & 0 & 0 & 2 & 0 & 2 \\
 \hline
 0 & -1 & 0 & 0 & 0 & 0 \\
 0 & 0 & -1 & 0 & 0 & 0 \\
 \cline{2-6}
 \multicolumn{1}{c}{0} & 0 & 0 & -1 & 0 & 0 \\
 \multicolumn{1}{c}{0} & 0 & 0 & 0 & -1 & 0 \\
 \multicolumn{1}{c}{0} & 0 & 0 & 0 & 0 & -1
\end{array}
\right]v
= \left[
\begin{array}{c}
 2  \\
 \hline
 3  \\
 0  \\
 \hline
 0  \\
-2  \\
 1
\end{array}
\right],\qquad v\in\C^{\cc d(2)}.
\end{equation}
%
%
Since the matrix $\cc\bD_2P_5(0,0)$ in the left-hand side of~\eqref{NonSingularMatrix} is not singular,
it follows that
PDE $L_5\cdot=F$ 
has only the unique polynomial solution
$$
  2 x y-3 x-y^2-4.
$$

\paragraph{\addtocounter{Example}{1}Example~\arabic{Example}}

Here we consider Helmholtz operator~\eqref{HelmholtzOperator} and polynomial~\eqref{HelmholtzPDE} again, but
take another point $(x_0,y_0)=(i,0)$ that is a root of polynomial~\eqref{HelmholtzPolynomial}.
Since $\deg F=2$ and $|\alpha|=1$, we shall use $\LL=3$; and the dimension of polynomial space to solve PDE
\begin{equation}\label{PDEHelmholtz4D}
  L_5\left(e^{-x}\cdot\right)=e^{-x}\left(-2 x y+3 x+y^2+2\right)
\end{equation}
is $\cc d(3)-\cc d(2)=d(3)=4$.

Since the linear algebraic system $\cc\bD_3P_5(i,0)v=\begin{bmatrix} 2 & 3 & 0 & 0 & -2 & 1 \end{bmatrix}^T$,
$v\in\C^{\cc d(2)}$,
is undetermined and consistent; using the standard technics,
we obtain the solution of PDE~\eqref{PDEHelmholtz4D}
\begin{align*}
           v_4 \left(3 x y+y^3\right)+v_2 \left(x+y^2\right)+v_3y+v_1-\frac{x^2
   y}{2}+x^2&+\frac{x y^2}{2}-\frac{x y}{2}-2 y^2,\\
   &v_1,v_2,v_3,v_4\in\C.
\end{align*}

\paragraph{\addtocounter{Example}{1}Example~\arabic{Example}}

In this example, we consider Poisson's equation. 
However we use a point $(1,1)$ that is not root of $P_1$.
We solve the following PDE:
\begin{equation}\label{PoissonEquation}
  L_1\left(e^{i x+i y}\cdot\right)=e^{i x+i y}(3+x-2y),
\end{equation}
where $L_1$ is Laplace operator~\eqref{2DLaplaceOperator}.
The value $\LL=1$ will suffice and the linear system is very simple
$$
\left[
\begin{array}{c|cc}
 -2 & 2 i & 2 i \\
 \hline
 0 & -2 & 0 \\
 0 & 0 & -2
\end{array}
\right]v=
\left[
\begin{array}{c}
  3 \\
  \hline
  1 \\
  -2 \\
\end{array}
\right]
$$
and the unique polynomial solution to PDE~\eqref{PoissonEquation} 
is of the form
$$
  -\frac{x}{2}+y-\left(\frac{3}{2}-\frac{i}{2}\right).
$$


\appendix

\section{Proof of Theorem~\ref{Theorem:RankOfDlL}}\label{AppendixTheorem:RankOfDlL}


First consider the matrix $\bD^0_{\lL} f(x_0)$, ${\lL}=1,\dots,\LL$.
We see that $\bD^0_{\lL} f(x_0)$ is a row vector
$\left[(-i)^{\lL} D^{\ind{{\jj}}{\alpha}} f(x_0)\right]$ $(1\le {\jj}\le d({\lL}))$.
Consequently the matrix $\bD^0_{\lL} f(x_0)$ has non-zero rank iff
there exists at least one multi-index $\ind{{\jj}}{\alpha}\in\cA_{\lL}$
such that $D^{\ind{{\jj}}{\alpha}}f(x_0)\ne0$.

Secondly
consider the matrices $\bD^{{\lL}'}_{\lL} f(x_0)$, ${\lL}'=1,\dots,{\lL}-1$, ${\lL}=1,\dots,\LL$. Note that
$\bD^{{\lL}'}_{\lL} f(x_0)$ is a $d({\lL}')\times d({\lL})$ matrix.
By $\ind{{\jj}{\kk}}{\gamma}$, $1\le {\jj}\le d({\lL}')$, $1\le {\kk}\le d({\lL})$, denote the difference 
$\ind{{\jj}}{\alpha}-\ind{{\kk}}{\beta}$, where $\left(\ind{1}{\alpha},\dots,\ind{d({\lL})}{\alpha}\right)={\cA}_{\lL}$
and $\left(\ind{1}{\beta},\dots,\ind{d({\lL}')}{\beta}\right)={\cA}_{{\lL}'}$.
Define auxiliary $d({\lL}')\times d({\lL})$ matrices $\bG^{{\lL}'}_{\lL}$, ${\lL}'=1,\dots,{\lL}-1$, ${\lL}=1,\dots,\LL$, as
follows
\begin{equation}\label{MatrixGamma}
  \begin{bmatrix}
    \bG^{{\lL}'}_{\lL}
  \end{bmatrix}_{{\jj}{\kk},\ 1\le {\jj}\le d({\lL}'),\, 1\le {\kk}\le d({\lL})}
  =\ind{{\jj}{\kk}}{\gamma}.
\end{equation}
We suppose that some entries of the matrix $\bG^{{\lL}'}_{\lL}$ do not
belong to $\Z^d_{\ge0}$, i.\,e., a tuple $\ind{{\jj}{\kk}}\gamma$ can contain
negative components.

\begin{lemma}\label{LemmaAboutGammaMatrix}
%
Any multi-index $\gamma\in\cA_{{\lL}-{\lL}'}$ appears once in each row and
no more than once in each column of the matrix $\bG^{{\lL}'}_{\lL}$.
However not every column of $\bG^{{\lL}'}_{\lL}$ contains the 
multi-index $\gamma$. 
\end{lemma}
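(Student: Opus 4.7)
The plan is to read off from definition~\eqref{MatrixGamma} that each row of $\bG^{{\lL}'}_\lL$ corresponds to a fixed $\beta \in \cA_{{\lL}'}$, each column to a fixed $\alpha \in \cA_\lL$, and the entry in position $({\jj},{\kk})$ is the $d$-tuple $\alpha - \beta$ (which may have negative components). Fixing an arbitrary $\gamma \in \cA_{\lL - {\lL}'}$, each of the three assertions reduces to a question about solvability of $\alpha - \beta = \gamma$ within the admissible multi-index sets.

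For the row count I would fix $\beta$; the equation forces $\alpha = \gamma + \beta$, which is componentwise non-negative and satisfies $|\alpha| = (\lL - {\lL}') + {\lL}' = \lL$, so $\alpha \in \cA_\lL$ and picks out exactly one column of that row. For the column count I would fix $\alpha$; the only candidate is $\beta = \alpha - \gamma$, which lies in $\cA_{{\lL}'}$ iff $\gamma \le \alpha$ componentwise. This simultaneously yields ``at most once per column'' and the observation that $\gamma$ is absent from any column whose index $\alpha$ fails to dominate $\gamma$.

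To finish, I would exhibit one such column explicitly. Since ${\lL}' < \lL$ we have $|\gamma| \ge 1$, so some coordinate $\nu$ satisfies $\gamma_\nu > 0$; letting $\alpha$ be the multi-index whose $\mu$th coordinate equals $\lL$ for some $\mu \ne \nu$, with all other coordinates zero, gives $\alpha \in \cA_\lL$ and $\alpha_\nu = 0 < \gamma_\nu$, whence $\gamma \not\le \alpha$. The argument is entirely multi-index bookkeeping with no substantive obstacle; the only subtlety is that this last step implicitly uses $d \ge 2$, consistent with the paper's multivariate setting.
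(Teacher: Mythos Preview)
Your argument is correct and follows the same path as the paper's proof: fix $\beta$ to get existence and uniqueness in each row via $\alpha=\gamma+\beta$, then fix $\alpha$ and use $\beta=\alpha-\gamma$ (valid iff $\gamma\le\alpha$) for the column statement. Your explicit construction of a column missing $\gamma$, together with the attendant $d\ge 2$ remark, goes slightly beyond the paper, which leaves that last point implicit.
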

\begin{proof}[Proof of Lemma~\ref{LemmaAboutGammaMatrix}.]
For any multi-index $\gamma\in\cA_{{\lL}-{\lL}'}$ and any row ${\jj}$,
${\jj}=1,\dots,d({\lL}')$, of the matrix $\bG^{{\lL}'}_{\lL}$, we can always  define
a $d$-tuple $\alpha\in\Z^d$ as $\alpha:=\gamma+\ssc{{\jj}}\beta$,
where $\ssc {\jj}\beta\in\cA_{{\lL}'}$. Since $\ssc
{\jj}\beta,\gamma\in\Z^d_{\ge0}$ and $|\ssc {\jj}\beta|={\lL}'$,
$|\gamma|={\lL}-{\lL}'$; therefore, $\alpha\in\Z^d_{\ge0}$, $|\alpha|={\lL}$.
Consequently there exists a unique (column) number
${\kk}\in\{1,\dots,d({\lL})\}$ such that $\alpha=\ind{{\kk}}{\alpha}\in\cA_{\lL}$.
%
%

Fix a column number ${\kk}\in\{1,\dots,d({\lL})\}$. Consider some
$\gamma\in\cA_{{\lL}-{\lL}'}$ and define a $d$-tuple $\beta$ as
$\beta:=\ind{{\kk}}\alpha-\gamma$, where $\ind{{\kk}}\alpha\in\cA_{\lL}$. If
$\gamma\le\ind{{\kk}}\alpha$, then $\beta=\ind{{\jj}}{\beta}\in\cA_{\lL}$ for
a unique (row)  number ${\jj}\in\{1,\dots,d({\lL}')\}$, else the column ${\kk}$
does not contain the multi-index $\gamma$.
\end{proof}

\begin{lemma}\label{LemmaAboutEntriesGammaMatrix}
For ${\lL}'<{\lL}$, let $\left(\ind1\beta,\dots,\ind{d({\lL}')}\beta\right)=\cA_{{\lL}'}$,
$\left(\ind1\gamma,\dots,\ind{d({\lL}-{\lL}')}\gamma\right)=\cA_{{\lL}-{\lL}'}$.
Consider an entry $\ind{{\jj}{\kk}}{\gamma}$ of the matrix $\bG_{\lL}^{{\lL}'}$
and suppose that $\ind{{\jj}{\kk}}{\gamma}$ is equal to a multi-index
$\ind{{\mm}}{\gamma}\in\cA_{{\lL}-{\lL}'}$, ${\mm}\in\{1,\dots,d({\lL}-{\lL}')\}$; then any entry
of the ${\kk}$th column of $\bG^{{\lL}'}_{\lL}$ below than
$\ind{{\jj}{\kk}}{\gamma}$, i.\,e., the entry $\ind{{\jj}'{\kk}}{\gamma}$,
${\jj}'\in\left\{{\jj}+1,\dots,d({\lL}')\right\}$, belongs either to the set of
the multi-indices $\{\ind{1}{\gamma},\dots,\ind{{\mm}-1}{\gamma}\}$ or
does not belong to $\Z^d_{\ge0}$.
\end{lemma}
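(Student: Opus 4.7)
The plan is to exploit the fact that when the column index $\kk$ is held fixed, $\ind{{\kk}}{\alpha}$ is constant along the column, while moving down the column amounts to shifting $\ind{{\jj}}{\beta}$ monotonically up in the lexicographic order on $\cA_{\lL'}$. By hypothesis, at row $\jj$ the difference $\ind{{\kk}}{\alpha}-\ind{{\jj}}{\beta}$ equals $\ind{{\mm}}{\gamma}\in\cA_{\lL-\lL'}$, and for any $\jj'>\jj$ Lemma~\ref{LemmaAboutGammaMatrix} guarantees that $\ind{{\kk}}{\alpha}-\ind{{\jj}'}{\beta}$ either has a negative component (in which case there is nothing to prove) or is itself an element of $\cA_{\lL-\lL'}$; only the latter case needs genuine argument.

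The key observation I would use is that subtraction from a fixed multi-index reverses the lex order: if $\eta\lxl\eta'$ in $\cA_{\lL'}$ and both $\alpha-\eta$ and $\alpha-\eta'$ lie in $\Z^d_{\ge0}$, then $\alpha-\eta\lxg\alpha-\eta'$. This follows immediately from the definition of $\lxl$, since the first coordinate at which $\eta$ and $\eta'$ disagree has its inequality sign flipped upon subtraction from the same $\alpha_i$. Applying this principle to $\ind{{\jj}}{\beta}\lxl\ind{{\jj}'}{\beta}$ (strict because $\jj<\jj'$) gives $\ind{{\kk}}{\alpha}-\ind{{\jj}'}{\beta}\lxl\ind{{\kk}}{\alpha}-\ind{{\jj}}{\beta}=\ind{{\mm}}{\gamma}$. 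Since this difference has length $\lL-\lL'$, it must equal $\ind{{\mm}'}{\gamma}\in\cA_{\lL-\lL'}$ with $\mm'<\mm$, which is precisely the claim.

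The main obstacle is essentially notational: one must keep careful track of which multi-index labels rows versus columns of $\bG^{\lL'}_{\lL}$ and of the exact convention of $\lxl$ in force, so that the sign flip propagates in the expected direction. Once the ``subtraction reverses $\lxl$'' principle is verified for the chosen convention, the proof is a short bookkeeping argument building on Lemma~\ref{LemmaAboutGammaMatrix}.
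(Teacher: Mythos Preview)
Your proposal is correct and follows essentially the same route as the paper's own proof: both arguments rest on the single observation that, with $\ind{{\kk}}{\alpha}$ fixed, the map $\ind{{\jj}}{\beta}\mapsto\ind{{\kk}}{\alpha}-\ind{{\jj}}{\beta}$ reverses the lexicographic order, so that the entries of the $\kk$th column of $\bG^{{\lL}'}_{\lL}$ strictly decrease in $\lxl$ as one moves down. The paper records this as the chain $\ind{1{\kk}}{\gamma}\lxg\ind{2{\kk}}{\gamma}\lxg\cdots\lxg\ind{d({\lL}'),{\kk}}{\gamma}$ and finishes by contradiction, whereas you argue directly; the content is the same.
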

\begin{proof}[Proof of Lemma~\ref{LemmaAboutEntriesGammaMatrix}.]
Since the multi-indices $\ind{{\jj}}{\beta}$, ${\jj}=1,\dots,d({\lL}')$, are
lexicographically ordered, i.\,e.,
$\ind{1}{\beta}\lxl\ind{2}{\beta}\lxl\cdots\lxl\ind{d({\lL}')}{\beta}$;
using~\eqref{MatrixGamma}, we obtain the order
\begin{equation}\label{Ordering}
  \ind{1{\kk}}{\gamma}\lxg\ind{2{\kk}}{\gamma}\lxg\cdots\lxg\ind{d({\lL}'),{\kk}}{\gamma}.
\end{equation}
Consider any $\ind{{\jj}'{\kk}}{\gamma}$, ${\jj}'\in\{{\jj}+1,\dots,d({\lL}')\}$,
assume the converse:
$\ind{{\jj}'{\kk}}{\gamma}\notin\{\ind{1}{\gamma},\dots,\ind{{\mm}-1}{\gamma}\}$
and $\ind{{\jj}'{\kk}}{\gamma}\in\Z^d_{\ge0}$. Then there exists a unique
number ${\mm}'\in\{{\mm},\dots,d({\lL}-{\lL})\}$ such that
$\ind{{\mm}'}{\gamma}=\ind{{\jj}'{\kk}}{\gamma}$. Since ${\mm}'\ge {\mm}$, we have
$\ind{{\mm}'}{\gamma}=\ind{{\jj}'{\kk}}{\gamma}\lxge\ind{{\mm}}{\gamma}=\ind{{\jj}{\kk}}{\gamma}$.
By~\eqref{Ordering}, we get ${\jj}'\le {\jj}$. This contradiction
concludes the proof.
\end{proof}

Finally let us prove the theorem. 
\begin{proof}[Proof of Theorem~\ref{Theorem:RankOfDlL}.]
Obviously that at least one non-zero derivative $D^{\gamma} f(x_0)$,
$|\gamma|={\lL}-{\lL}'$, is necessary for full rank of the
matrix $\bD^{{\lL}'}_{\lL} f(x_0)$.

Sufficiency.
For some $\ind{{\mm}}{\gamma}\in\cA_{{\lL}-{\lL}'}$, ${\mm}\in\{1,\dots,d({\lL}-{\lL}')\}$,
suppose that the derivative $D^{\ind{{\mm}}{\gamma}}f(x_0)\ne0$.
By Lemma~\ref{LemmaAboutGammaMatrix}, from the matrix $\bD^{{\lL}'}_{\lL} 
f(x_0)$, we can take a $d({\lL}')\times d({\lL}')$ submatrix with
the non-zero main diagonal $\left(D^{\ind{{\mm}}{\gamma}}f(x_0),\dots,D^{\ind{{\mm}}{\gamma}}f(x_0)\right)$
and we denote this submatrix by $\bS_{\mm}$.

Suppose, for $\ind{1}{\gamma}\in\cA_{{\lL}-{\lL}'}$, $D^{\ind{1}{\gamma}}f(x_0)\ne0$; then, by
Lemma~\ref{LemmaAboutEntriesGammaMatrix} and
property~\eqref{BinomialCoefficientVanishes}, all the entries
below the main diagonal of the submatrix $\bS_1$ vanish. So the
matrix $\bS_1$ is not singular, consequently $\rank \bD^{{\lL}'}_{\lL} f(x_0)=d({\lL}')$.

Otherwise, $D^{\ind{1}{\gamma}}f(x_0)=0$.
Suppose there exists a multi-index $\ind{{\mm}}{\gamma}\in\cA_{{\lL}-{\lL}'}$,
${\mm}\in\{2,\dots,d({\lL}-{\lL}')\}$, such that $D^{\ind{{\mm}}{\gamma}}f(x_0)\ne0$
and all the derivatives $D^{\ind{{\mm}'}{\gamma}}f(x_0)$,
${\mm}'=1,\dots,{\mm}-1$, vanish. Since all the entries below the
main diagonal of the matrix $\bS_{\mm}$ vanish, it follows that $\det
\bS_{\mm}\ne0$.
%
This concludes the proof of the sufficiency.
\end{proof}

\section{Proof of Proposition~\protect\ref{LemmaAboutDeltaMatricesInclusion}}
             \label{AppendixPropositionAboutDeltaMatricesInclusion}

\begin{proof}
Suppose a vector
$v\in\C^{\cc d(\LL)}$ belongs to $\ker\cc{{\te\bD}}_\LL$,
then we have the following system of equalities
\begin{equation}\label{VectorInNullSpace}
  \sum_{{\mm}=1}^{\cc d(\LL)}\dbinom{\ind{{\mm}}{\alpha}}{\ind{{\jj}}{\beta}}
      D^{\ind{{\mm}}{\alpha}-\ind{{\jj}}{\beta}}f_{\kk}(x_0)v_{\mm}=0,\qquad
  \begin{aligned}
    &{\kk}=1,\dots,{\NN},\\[-0.005\textheight]
    &{\jj}=1,\dots,\cc d(\LL),
  \end{aligned}
\quad \ind{{\mm}}{\alpha},\ind{{\jj}}{\beta}\in\cc\cA_\LL. 
\end{equation}
Consider some $\tau\in\Zp^d$, $|\tau|=1$, with a non-zero
component $\tau_s=1$, $s\in\{1,\dots,d\}$;
then, for multi-indices $\ind{{\jj}}{\beta}$, $\ind{{\mm}}{\alpha}$
such that $\tau\le\ind{{\jj}}{\beta}$, $\tau\le\ind{{\mm}}{\alpha}$,
we have
$$
  \dbinom{\ind{{\mm}}{\alpha}}{\ind{{\jj}}{\beta}} =
     \frac{\ind{{\mm}}{\alpha_s}}{\ind{{\jj}}{\beta_s}}\dbinom{\ind{{\mm}}{\alpha}-\tau}{\ind{{\jj}}{\beta}-\tau},
$$
where $\ind{{\mm}}{\alpha_s}$, $\ind{{\jj}}{\beta_s}$ are the $s$th
components of $\ind{{\mm}}{\alpha}$,
$\ind{{\jj}}{\beta}$, respectively.
For some $\ind{{\mm}'}{\alpha}$, $\ind{{\jj}'}{\beta}$ such that
$\tau\not\le\ind{{\mm}'}{\alpha}$ and $\tau\le\ind{{\jj}'}{\beta}$,
the binomial coefficients
$\dbinom{\ind{{\mm}'}{\alpha}-\tau}{\ind{{\jj}'}{\beta}-\tau}$ vanish; thus, by~\eqref{VectorInNullSpace},
for all $\ind{{\jj}}{\beta}\in\cA_{\lL}$  
such that $\tau\le\ind{{\jj}}{\beta}$,  we have
\begin{equation}\label{VectorInNullSpaceAgain}
  \sum_{\begin{subarray}{c}
          {\mm}\in\{1,\dots,\cc d(\LL)\},\\[1pt]
          \tau\le\ind{{\mm}}{\alpha}
        \end{subarray}}
   \frac{\ind{{\mm}}{\alpha_s}}{\ind{{\jj}}{\beta_s}}\dbinom{\ind{{\mm}}{\alpha}-\tau}{\ind{{\jj}}{\beta}-\tau}
        D^{\ind{{\mm}}{\alpha}-\tau-(\ind{{\jj}}{\beta}-\tau)}f_{\kk}(x_0)v_{\mm} = 0.
\end{equation}
In the previous formula, renumbering the indices ${\mm}$ and ${\jj}$
one after another, since a lexicographical order respects subtraction;
we can write
\begin{equation}\label{VectorInNullSpaceAgainII}
    \sum_{{\mm}=1}^{\cc d(\LL-1)}\dbinom{\ind{\sigma({\mm})}{\alpha}}{\ind{\sigma({\jj})}{\beta}}
        D^{\ind{\sigma({\mm})}{\alpha}-\ind{\sigma({\jj})}{\beta}}f_{\kk}(x_0)\te v_{{\mm}} = 0,\quad
  \begin{aligned}
       &\left(\ind{1}{\alpha},\dots,\ind{\cc d(\LL-1)}{\alpha}\right)=\cc\cA_{\LL-1},
           \\
       &\left(\ind{1}{\beta},\dots,\ind{d({\lL}-1)}{\beta}\right)=\cA_{{\lL}-1}.  
  \end{aligned}
\end{equation}
where
\begin{equation}\label{PermutationOfV}
  \te v_{{\mm}}:=\ind{\sigma({\mm})}{\alpha_s}v_{\sigma({\mm})}
\end{equation}
and $\sigma$s stand for the permutations of multi-indices.
(Note that $\sigma({\mm})$ and $\sigma({\jj})$ in
formula~\eqref{VectorInNullSpaceAgainII} are different
permutations.)

Suppose that the vector $v$ contains at least one non-zero component $v_t\ne0$, $t\in\{1,\dots,d(\LL)\}$.
In formula~\eqref{VectorInNullSpaceAgain}, taking some $\tau\in\Zp^d$, $|\tau|=1$, such that
$\tau\le\ind{t}{\alpha}$;
then sums in~\eqref{VectorInNullSpaceAgain} and~\eqref{VectorInNullSpaceAgainII}
contain the non-zero component $v_t$.
Now it is not hard to see that the vector $\te v\in\C^{d(\LL-1)}$ is a non-zero vector from the null-space of
$\bD^{{\lL}-1}_{\LL-1} f(x_0)$.
%
\end{proof}

\section{Proof of Theorem~\protect\ref{PolynomialSpasesInclusion}}\label{AppendixPolynomialSpasesInclusion}

\begin{proof}
Suppose a polynomial 
$p\in\cV_\LL$.
Then there exists a vector $v\in\ker\cc\bD_\LL P(x_0)$
such that $p=\left[\cc\cP_\LL\right]v$. Obviously,
\begin{equation}\label{InclusionToPi}
  p\in\Pi_{\le\LL}.
\end{equation}

The matrix $\cc\bD_{\LL+1}P(x_0)$ can be presented as a block matrix
\begin{equation}\label{BlockFormMatrixD}
  \cc\bD_{\LL+1}P(x_0):=
  \left[\begin{array}{ccc|c}
     &  && \bD^{0}_{\LL+1}P(x_0)\\
     & \raisebox{0.005\textheight}{$\cc\bD_{\LL}P(x_0)$} && \vdots\\
     &  && \bD^{\LL}_{\LL+1}P(x_0)\\
     \cline{1-3}
   0 & \dots & \multicolumn{1}{c}{0} & \bD^{\LL+1}_{\LL+1}P(x_0)
  \end{array}\right],
\end{equation}
where the submatrices $\bD^{0}_{\LL+1},\dots,\bD^{\LL+1}_{\LL+1}$ are given by~\eqref{MatrixDlL}.
Introduce an auxiliary column vector as follows $v^\sharp
:=\begin{bmatrix}
    v^T &
    \underbrace{\begin{array}{ccc} 0 & \cdots & 0\end{array}}_{d(\LL+1)}
  \end{bmatrix}^T$. 

Then, using block form~\eqref{BlockFormMatrixD} of the matrix $\cc\bD_{\LL+1}P(x_0)$,
we get $v^\sharp\in\ker\cc\bD_{\LL+1}P(x_0)$. Since $p=\left[\cc\cP_{\LL+1}\right]v^\sharp$, it follows that
$p\in\cV_{\LL+1}$; and, by\eqref{InclusionToPi}, we obtain $p\in\Pi_{\le \LL}\cap\cV_{\LL+1}$.
Thus we have $\cV_\LL\subseteq\Pi_{\le \LL}\cap\cV_{\LL+1}$.

Contrary. Suppose a polynomial $p\in\Pi_{\le \LL}\cap\cV_{\LL+1}$. Since $p\in\Pi_{\le\LL}$, it follows that
the polynomial can be presented as follows $p=\left[\cc\cP_{\LL+1}\right]v^\sharp$, 
$v^\sharp:=\begin{bmatrix}
    v^T &
    \underbrace{\begin{array}{ccc} 0 & \cdots & 0\end{array}}_{d(\LL+1)}
  \end{bmatrix}^T$, where $v\in\C^{\cc d(\LL)}$.  
Since $v^\sharp\in\ker\cc\bD_{\LL+1}P(x_0)$ and, arguing as above; 
$v\in\ker\cc\bD_{\LL}P(x_0)$. Thus $p\in\cV_\LL$; i.\,e., we have $\cV_\LL\supseteq\Pi_{\le \LL}\cap\cV_{\LL+1}$.

This concludes the proof.
\end{proof}

\end{document}